\title{Algebraic models of the Euclidean plane}
\author{\vspace{0cm} J\'er\'emy Blanc and Adrien Dubouloz}
\institution{Departement Mathematik und Informatik, Unversit\"at
Basel, Spiegelgasse 1, CH-4051
Basel, Switzerland}\\
\email{jeremy.blanc@unibas.ch}}\\
\institution{IMB UMR5584, CNRS, Univ. Bourgogne Franche-Comt\'e,
F-21000 Dijon,
France}\\
\email{adrien.dubouloz@u-bourgogne.fr}} }
\date{\vspace{-5ex}} 
\journal{\'Epijournal de G\'eom\'etrie Alg\'ebrique} 
\numberwithin{equation}{numsection}
\newtheorem{theorem}{Theorem}
\newtheorem{lemma}{Lemma}[section]
\newtheorem{corollary}[lemma]{Corollary}
\newtheorem{proposition}[lemma]{Proposition}
\newtheorem{question}[lemma]{Question}
\newtheorem{definition}[lemma]{Definition}
\newtheorem{notation}[lemma]{Notation}
\newtheorem{remark}[lemma]{Remark}
\newtheorem{example}[lemma]{Example}
\newcommand{\Aut}{\mathrm{Aut}}
\newcommand{\iso }{\stackrel{\simeq}{\longrightarrow}}
\renewcommand{\k}{\mathbf{k}}
\renewcommand{\r}{\mathbb{R}}
\newcommand{\C}{\mathbb{C}}
\newcommand{\p}{\mathbb{P}}
\newcommand{\A}{\mathbb{A}}
\begin{document}


\maketitle



\begin{prelims}


\def\abstractname{Abstract}
\abstract{We introduce a new invariant, the real
$($logarithmic$)$-Kodaira dimension, that allows to distinguish
smooth real algebraic surfaces up to birational diffeomorphism. As
an application, we construct infinite families of smooth rational
real algebraic surfaces with trivial homology groups, whose real
loci are diffeomorphic to $\mathbb{R}^2$, but which are pairwise not
birationally diffeomorphic. There are thus infinitely many
non-trivial models of the euclidean plane, contrary to the compact
case.}

\keywords{Real algebraic model; affine surface; rational fibration;
birational diffeomorphism; affine complexification}

\MSCclass{14R05; 14R25; 14E05; 14P25; 14J26}

\vspace{0.15cm}

\languagesection{Fran\c{c}ais}{%

\textbf{Titre. Mod\`eles alg\'ebriques du plan euclidien}
\commentskip \textbf{R\'esum\'e.} Nous introduisons un nouvel
invariant, la dimension de Kodaira (\emph{logarithmique}) r\'eelle,
qui permet de distinguer les surfaces alg\'ebriques r\'eelles lisses
\`a diff\'eomorphismes birationnels pr\`es. En guise d'application,
nous construisons des familles infinies de surfaces alg\'ebriques
r\'eelles rationnelles lisses ayant des groupes d'homologie
triviaux, dont les lieux r\'eels sont diff\'eomorphes \`a
$\mathbb{R}^2$ mais qui sont deux \`a deux non birationnellement
diff\'eomorphes. Contrairement au cas compact, il y a donc une
infinit\'e de mod\`eles non triviaux du plan euclidien.}

\end{prelims}


\newpage

\setcounter{tocdepth}{1} \tableofcontents

\section*{Introduction}

A real quasi-projective algebraic variety $X$ can be viewed as a
complex quasi-projective algebraic variety endowed with an
anti-regular involution, or equivalently as a locally closed
subscheme of $\mathbb{P}^n_{\mathbb{C}}$ which is defined over $\r$.
We can then speak about the set $X(\r)$  of real points of $X$ (real
locus). If $X$ is smooth, this set is naturally endowed with the
structure of differential real manifold, and $X$ is said to be an
\emph{algebraic model} of this differential manifold. Two models
$X_1$ and $X_2$ of the same manifold are said to be equivalent if
there exists a diffeomorphism $X_1(\r)\to X_2(\r)$ which comes from
a birational map $\varphi\colon X_1\dasharrow X_2$, such that
$\varphi$ and $\varphi^{-1}$ are defined at each point of $X_1(\r)$
and $X_2(\r)$ respectively. Such a map is called a \emph{birational
diffeomorphism}. In general a manifold can admit plenty of different
models. For example, the hypersurfaces of $\mathbb{P}_{\mathbb{R}}^3$ given by
the equations $x^{2n}+y^{2n}+z^{2n}-t^{2n}=0$, $n\ge 1$,  provide
infinitely many models of the sphere $\mathbb{S}^2$ which are
pairwise not birational. Nevertheless, if one restricts to the
simplest ones, namely the rational models, then for smooth compact
manifolds of dimension at most $2$, the model is then unique. In
dimension~$1$, we obtain only $\mathbb{P}^1$ and in dimension $2$
this is the following result of Biswas and Huisman:

\bigskip

\noindent{\bf Theorem.} {\rm \cite[Corollary 8.1]{BH}} {\it A
compact connected real manifold of dimension $2$ admits a rational
model if and only if it is non-orientable or diffeomorphic to
$\mathbb{S}^{2}$ or $\mathbb{S}^{1}\times \mathbb{S}^{1}$. Moreover,
this model is unique, up to birational diffeomorphism.}

\bigskip

In the non-compact case, the real locus of the real affine algebraic
variety $\A^2_\r$ provides an obvious rational algebraic model of
the Euclidean plane $\r^2$ endowed with its standard structure of
differential manifold. It is easy to find plenty of other rational
models of $\r^2$: we can choose for instance the complement in
$\p^2_\r$ of a smooth irreducible real curve $\Gamma\subseteq \p^2_\r$ of
odd degree $d\ge 3$ such that $\Gamma(\r)$ is an oval equivalent to
a line by a diffeomorphism of $\r\p^2$. It is thus natural to
restrict the study of such models to the smaller class of ``Fake
real planes'', introduced in \cite{DM15} as being smooth algebraic
surfaces $S$ defined over $\r$, non isomorphic to $\A_{\r}^{2}$ but
whose real locus is diffeomorphic to $\r^{2}$ and whose
complexifications $S_{\C}$ have ``minimal topology'' in the sense
that they are \emph{$\mathbb{Q}$-acyclic} topological manifolds,
that is, topological manifolds whose singular homology groups with
rational coefficients $\tilde{H}_i(S_{\C};\mathbb{Q})$ are all
trivial.

By general results \cite{Fu82,GuP97,GuPS97,DM15} all these surfaces
are affine and rational. A partial classification of them as real
algebraic varieties was given in \cite{DM15}, according to their
usual Kodaira dimension. Families of fake real planes of each
Kodaira dimension $\kappa\in\{-\infty,0,1,2\}$ birationally
diffeomorphic to $\mathbb{A}^2_{\r}$ were constructed in
\cite{DMSpitz}. The existence of fake real planes non birationally
diffeomorphic to $\A_{\r}^{2}$ was left open.

Here we show that $\r^{2}$ admits algebraic models non birationally
diffeomorphic to $\A_{\r}^{2}$ of every Kodaira dimension
$\kappa=0,1,2$, answering the main question of \cite{DM15} :

\begin{theorem}\label{TheoremFirst}
There are infinitely many rational models $S$ of the plane $\r^{2}$
up to birational diffeomorphism, all having trivial reduced homology
groups $\tilde{H}_{i}(S_{\C};\mathbb{Q})$. Such models exist for
every $\kappa=0,1,2$, and moreover, for $\kappa=1,2$, there exist
infinitely many models $S$ up to birational diffeomorphism for which
$S_{\C}$ is even topologically contractible.
\end{theorem}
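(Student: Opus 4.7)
The plan is to introduce a refinement of the logarithmic Kodaira dimension that is sensitive only to the non-real part of the boundary of a completion, to show that it is a birational diffeomorphism invariant, and then to use it to separate explicit families of surfaces.

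First, I would define the \emph{real logarithmic Kodaira dimension} $\bar\kappa_\r(S)$ of a smooth real affine surface $S$ as follows. Choose an SNC completion $(X,D)$ of $S_\C$ defined over $\r$, let $D_\r$ be the sum of those irreducible components of $D$ whose real locus meets $X(\r)$ in a real curve, and set $\bar\kappa_\r(S) := \kappa(X, K_X+D-D_\r)$. Independence of the completion follows from the usual weak factorization/blow-up calculus. The invariance under birational diffeomorphisms $\varphi\colon S_1\dashrightarrow S_2$ is the decisive new input: resolving $\varphi$ on a common smooth completion, every blow-up center lies away from the real locus (since $\varphi$ and $\varphi^{-1}$ are defined at every real point), hence the closure of each such center is contained in the support of $D-D_\r$, and the induced isomorphism of log-pluricanonical rings preserves the sections of $m(K+D-D_\r)$ for all $m$.

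Second, I would construct, for each $\kappa\in\{0,1,2\}$ and each integer $n\ge 1$, an explicit smooth $\mathbb{Q}$-acyclic real surface $S_{n,\kappa}$ with $S_{n,\kappa}(\r)\cong\r^2$ as differential manifolds and with complex logarithmic Kodaira dimension equal to $\kappa$. The construction starts from $\A^2_\r$ (or a simple real affine modification thereof) and applies carefully chosen sequences of real affine modifications along pairs of complex conjugate curves disjoint from the real locus, so that $S_{n,\kappa}(\r)$ is unchanged throughout. The integer $n$ parametrizes numerical invariants (self-intersections, multiplicities, or genera) of those complex centers, which in turn control $\bar\kappa_\r(S_{n,\kappa})$. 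For $\kappa=1,2$, an extra constraint is built into the construction so that $S_{n,\kappa,\C}$ is not only $\mathbb{Q}$-acyclic but also topologically contractible, in the spirit of the contractible affine surfaces of tom~Dieck--Petrie type and of the families in \cite{DMSpitz}.

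Third, a direct computation on an adapted SNC completion shows that $\bar\kappa_\r(S_{n,\kappa})$ takes infinitely many distinct values as $n$ varies, while the complex logarithmic Kodaira dimension, the real diffeomorphism type and the relevant topological properties of the complexification all remain controlled. Hence each family $\{S_{n,\kappa}\}_{n\ge 1}$ contains infinitely many pairwise non-birationally-diffeomorphic rational models of $\r^2$ with trivial reduced rational homology, which is the theorem.

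The hard part will be establishing birational diffeomorphism invariance of $\bar\kappa_\r$: although a birational diffeomorphism need not extend to a morphism of completions, one must show that any resolution of its indeterminacy factors as successive blow-ups at centers disjoint from the real loci, and that, on any SNC completion, such centers are always contained in the support of $D-D_\r$, so that the corresponding correction terms do not contribute sections to the log-pluricanonical system of $K+D-D_\r$. A secondary difficulty is arranging the numerical invariants of the examples so that $\bar\kappa_\r$ really varies while $\kappa$, the topology of $S(\r)$ and (when $\kappa=1,2$) contractibility of $S_\C$ remain fixed; real affine modifications along conjugate pairs of smooth rational curves should provide enough flexibility, but the bookkeeping of homology and log-canonical sections must be done carefully.
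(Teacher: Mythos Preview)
Your proposal contains several genuine gaps.

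\textbf{The invariant is defined in the wrong direction and is not a birational diffeomorphism invariant.} You set $\bar\kappa_\r(S)=\kappa(X,K_X+D-D_\r)$, keeping the \emph{non-real} part of the boundary. But removing curves with empty real locus does not change $S(\r)$, so this quantity cannot be preserved. Concretely, let $C_1,C_2\subset\p^2_\r$ be two smooth conics with $C_i(\r)=\varnothing$ in general position and $L$ a real line, and set $S'=\p^2_\r\setminus(L\cup C_1\cup C_2)$. The open inclusion $S'\hookrightarrow\A^2_\r$ is a birational diffeomorphism since $S'(\r)=\A^2_\r(\r)$. Yet with $(X,D)=(\p^2,L+C_1+C_2)$ one has $D-D_\r=C_1+C_2$ and $K_{\p^2}+C_1+C_2\sim H$, so $\bar\kappa_\r(S')=2$, while $\bar\kappa_\r(\A^2_\r)=\kappa(\p^2,K_{\p^2})=-\infty$. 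The correct invariant, used in the paper, is $\kappa_\r(S)=\kappa(V,K_V+B_\r)$, keeping only the real part of the boundary.

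\textbf{The invariance argument rests on a false premise.} You assert that in a resolution of $\varphi$ every blow-up center lies away from the real locus because $\varphi$ and $\varphi^{-1}$ are regular at real points. But $\varphi$ is only regular at real points of the \emph{open} surface $S$; its extension to a projective completion can, and typically does, have real base points lying on the boundary $B$. The paper's decomposition lemma treats precisely this case: blow-ups at real boundary points are of type~(2), and one must check the effect on $K_V+B_\r$ (not on $K_V+B-B_\r$) to get the needed inequality.

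\textbf{A Kodaira-type dimension cannot take infinitely many values.} Any Iitaka dimension of a surface lies in $\{-\infty,0,1,2\}$, so your claim that $\bar\kappa_\r(S_{n,\kappa})$ ``takes infinitely many distinct values as $n$ varies'' is impossible. To separate infinitely many models within a fixed $\kappa$, one needs finer invariants. The paper does this in two different ways: for $\kappa=1$, the log-canonical map is a $\p^1$-fibration, Lemma~\ref{lem:Invariance-kappa} forces a birational diffeomorphism to respect it, and the fibre multiplicities $(a,b)$ then distinguish the surfaces $S(a,b)$; for $\kappa=2$, one shows (using that $B=B_\r$ and the log-canonical model) that any birational diffeomorphism is already an isomorphism, after which the dual graphs of the boundaries separate the Miyanishi--Sugie surfaces $S_s$.

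\textbf{The construction strategy is also inverted.} Modifying along pairs of non-real conjugate curves leaves $S(\r)$ unchanged but adds non-real boundary components; this moves $\kappa(S)$ without moving $\kappa_\r(S)$, so it cannot produce examples distinguishable by a birational-diffeomorphism invariant. The paper's examples go the other way: they are built so that $B=B_\r$ is a tree of real rational curves, whence $\kappa_\r(S)=\kappa(S)$ automatically, and the ordinary log-Kodaira theory then does the work.
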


In order to prove this result, we define a notion of \emph{real
Kodaira dimension} $\kappa_\r(S)$ (Definition~\ref{Defi:KappaR}),
which has the property to be smaller than or equal to the classical
one $\kappa(S)$, and can be computed in a very similar way (see
Definition~\ref{Defi:KappaR} and Remark~\ref{Rem:Imagloops}).
Moreover, we have equality $\kappa(S)=\kappa_\r(S)$ in the natural
case where $S$ admits a smooth projective completion $V$ with SNC
boundary $B=V\setminus S$ consisting only of real curves isomorphic
to $\p^1_{\r}$, and intersecting only at real points. The main new
noteworthy feature of $\kappa_\r(S)$ is that it is invariant under
birational diffeomorphisms (Corollary~\ref{RealKodIsIvariant}),
contrary to $\kappa(S)$ (Example~\ref{Exa:DifferentKappaR}).

We establish the following result, from which
Theorem~\ref{TheoremFirst} directly follows.

\begin{theorem}\label{TheoremSecond}
For each $l\in \{0,1,2\}$, there is a smooth affine surface $S$,
algebraic model of the plane $\r^2$, with trivial rational homology
groups $\tilde{H}_{i}(S_{\C};\mathbb{Q})$ and
$\kappa_\r(S)=\kappa(S)=l$. Moreover, for $l\in \{1,2\}$, we can
find infinitely many such $S$ with topologically contractible
complexifications $S_{\C}$, up to birational diffeomorphism.
\end{theorem}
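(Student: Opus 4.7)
The plan is to prove Theorem~\ref{TheoremSecond} by exhibiting explicit constructions of affine surfaces of each Kodaira dimension $l \in \{0,1,2\}$ and then using $\kappa_\mathbb{R}$, together with a finer invariant, to separate them up to birational diffeomorphism. Once this is done, Theorem~\ref{TheoremFirst} follows automatically by quoting Corollary~\ref{RealKodIsIvariant}, which says that $\kappa_\mathbb{R}$ is a birational-diffeomorphism invariant. I would begin from a smooth projective rational surface $V$ over $\mathbb{R}$, obtained by blowing up $\mathbb{P}^2_\mathbb{R}$ (or a suitable Hirzebruch surface) along a configuration of points, and remove an SNC boundary $B$ so that $S=V\setminus B$ is affine. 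The blow-up centres and the curves contracted into $B$ are chosen so that $S(\mathbb{R})$ equals a single real affine chart, diffeomorphic to $\mathbb{R}^2$. Controlling what happens off the real locus (using non-real conjugate pairs of centres and of exceptional divisors removed together with $B$) is what will make $S_\mathbb{C}$ have vanishing reduced rational homology, and, in the cases $l=1,2$, what will make $S_\mathbb{C}$ topologically contractible.

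For the Kodaira dimensions, I would take well-known complex contractible prototypes and adapt them to be defined over $\mathbb{R}$: a tom Dieck--Petrie or Ramanujam-style surface for $l=2$, a surface carrying a rational $\mathbb{A}^1_*$-fibration with one multiple fibre for $l=1$, and a log-Enriques (or Q-acyclic) construction for $l=0$. For each prototype, $\kappa(S)=l$ can be computed by the standard combinatorics of $K_V+B$ on a log-resolution. To secure $\kappa_\mathbb{R}(S)=\kappa(S)$, I would arrange the completion $V$ so that every component of $B$ is a real $\mathbb{P}^1$ and every intersection of boundary components is a real point; then the explicit criterion stated in the paragraph just before Theorem~\ref{TheoremSecond} applies directly. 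If this cannot be arranged for a given prototype, I would instead compute $\kappa_\mathbb{R}$ from Definition~\ref{Defi:KappaR}, carefully handling the conjugate pairs of boundary components and of boundary nodes that sit away from $V(\mathbb{R})$.

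To produce infinitely many examples for $l=1,2$, I would introduce a discrete parameter $n\ge 1$ (a chain-length in the blow-up, a degree of a branching divisor, or the multiplicity of a fibre) giving surfaces $S_n$ satisfying all the topological and Kodaira constraints simultaneously. Since $\kappa_\mathbb{R}(S_n)$ is the same integer for all $n$, a single invariant will not separate them, and the key task is to extract a finer one. My plan is to pass to a real log-minimal model: contract successively all real $(-1)$-curves of $B$ until the pair $(V,B)$ is as simple as possible while keeping the contractions defined over $\mathbb{R}$ and avoiding the real locus of $S$. The resulting dual graph, or at least its sequence of self-intersections along a distinguished chain meeting $V(\mathbb{R})$, should be an invariant of $S$ up to birational diffeomorphism by the same kind of argument that proves $\kappa_\mathbb{R}$ invariant, and should take distinct values on the $S_n$.

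The hardest step, I expect, is precisely this last one: upgrading $\kappa_\mathbb{R}$ to a combinatorial invariant that is both fine enough to separate the $S_n$ and stable enough to be preserved under arbitrary birational diffeomorphisms. This requires a genuine structure theorem for real log-minimal models of surfaces with $S(\mathbb{R})\cong\mathbb{R}^2$, explaining which log-resolutions of $(V,B)$ are forced by the real geometry, and showing that a birational diffeomorphism $S_1\dashrightarrow S_2$ induces, after resolution, an isomorphism of the appropriate real minimal boundary data. Once such a structural statement is in hand, verifying it on the explicit family $S_n$ becomes a direct combinatorial computation, and Theorem~\ref{TheoremSecond} follows.
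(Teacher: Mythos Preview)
Your overall architecture matches the paper's: explicit constructions for each $l$, with the boundary $B$ arranged to consist of real $\mathbb{P}^1$'s meeting only in real points so that Proposition~\ref{Prop:KappaR-properties}\,\hyperref[kapparsmallerkappa]{(2)} gives $\kappa_{\r}(S)=\kappa(S)$. The paper uses exactly the prototypes you name: $Y(3,3,3)$ for $l=0$, an $\mathbb{A}^1_*$-fibred family $S(a,b)$ for $l=1$, and Miyanishi--Sugie surfaces $S_s$ (of Ramanujam/tom Dieck--Petrie flavour) for $l=2$.

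The substantive gap is in your separation step. You propose to build a combinatorial invariant from a ``real log-minimal model'' and acknowledge that this would require a new structure theorem controlling how birational diffeomorphisms act on boundary data. The paper bypasses this entirely, and the key point you are missing is that Lemma~\ref{lem:Invariance-kappa} already hands you the needed finer invariant: not just $\kappa_{\r}$, but the entire graded ring $\bigoplus_{m\ge 0} H^0(V,m(K_V+B_{\r}))$ is preserved by birational maps of pairs. This is decisive. For $l=2$, the paper shows (Lemma~\ref{lem:canonical-morph-iso}, using the Bogomolov--Miyaoka--Yau inequality to rule out contractible curves in $S_{\C}$) that the log-canonical map restricts to an isomorphism on $S$; combined with Lemma~\ref{lem:Invariance-kappa} this forces every birational diffeomorphism to be a genuine isomorphism (Proposition~\ref{prop:real-log-gen-diffbir-iso}), after which ordinary dual-graph invariants of $B_s$ separate the $S_s$. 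For $l=1$, the same lemma makes the log-canonical fibration $V\to\mathbb{P}^1_{\r}$ a birational-diffeomorphism invariant, and the multiplicities $(a,b)$ of its degenerate fibres over fixed points then distinguish the $S(a,b)$.

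So your plan is not wrong in spirit, but it aims to prove a harder, vaguer statement (invariance of a minimal boundary graph under birational diffeomorphism) when the sharper and already-available tool is the invariance of the log-canonical ring. Once you recognise that Lemma~\ref{lem:Invariance-kappa} gives you $\mathrm{Proj}$ of this ring functorially, the ``structure theorem'' you anticipate becomes unnecessary: for $\kappa_{\r}=2$ the log-canonical model \emph{is} $S$, and for $\kappa_{\r}=1$ it \emph{is} the base of the fibration.
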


As $\kappa_\r$ is invariant under birational diffeomorphisms, every
fake real plane $S$ birationally diffeomorphic to $\A^2_{\r}$
satisfies $\kappa_\r(S)=-\infty$ (Corollary~\ref{cor:Kodaira-A2}),
so every of the examples which we construct in Theorem
\ref{TheoremSecond} is a fake real plane not birationally
diffeomorphic to $\A^2_{\r}$.

\medskip

In contrast with the cases $\kappa=1,2$, the only smooth algebraic
models of $\r^2$ of Kodaira dimension $-\infty$ and $0$ with
$\mathbb{Q}$-acyclic complexifications known so far are respectively
the affine plane $\A^2_{\r}$ and a real model $Y(3,3,3)$ of one of
Fujita's exceptional surfaces \cite{Fu82} which was constructed in
\cite{DM15}. This motivates the following question:
\begin{question}
Are the surfaces $\A^2_{\r}$ and the fake real plane $Y(3,3,3)$ of
real Kodaira dimension $0$ given in $\S\ref{Kod0}$ the unique
algebraic models of $\r^2$ with trivial reduced rational homology
groups of real Kodaira dimension $-\infty$ and $0$, up to birational
diffeomorphism ?
\end{question}

The article is organised as follows: Section~\ref{Prelim} contains
some preliminaries. In  Section~\ref{Sec:RealKod}, we define the
real Kodaira dimension of a smooth real surface and establish its
basic properties. We also give some examples of fake real planes of
real Kodaira dimension $0$ (the surface $Y(3,3,3)$ in
$\S\ref{Kod0}$) and $2$ (the Ramanujam surface in
$\S\ref{Kod2Ram}$). Then, in Sections~\ref{Kod1} and \ref{Kod2}, we
provide families of pairwise not birational diffeomorphic fake real
planes of Kodaira dimension $1$ and $2$ respectively, which achieve
the proof of Theorem~\ref{TheoremSecond} hence of
Theorem~\ref{TheoremFirst}. The last subsection
($\S\ref{SubSecNontrivialforms}$) describes pairs of fake real
planes having the same complexifications but such that one has real
Kodaira dimension $2$ and the other is birationally diffeomorphic to
$\A^2_{\r}$.

We thank the referee for his careful reading and his helpful
comments to improve the exposition of this text.

\section{Preliminaries }\label{Prelim}

A $\k$-variety is a geometrically integral scheme $X$ of finite type
over a base field $\k$. A morphism of $\k$-varieties is a morphism
of $\k$-schemes. In the sequel, $\k$ will be equal to either $\r$ or
$\C$, and we will say that $X$ is a real, respectively complex,
algebraic variety. A complex algebraic variety $X$ will be said to
be defined over $\r$ if there exists a real algebraic variety
$X_{0}$ and an isomorphism of complex algebraic varieties between
$X$ and the complexification
$X{}_{0,\C}=X_{0}\times_{\mathrm{Spec}(\r)}\mathrm{\mathrm{Spec}(\C)}$
of $X_{0}$, where $\mathrm{Spec}(\C)\rightarrow\mathrm{Spec}(\r)$ is
the morphism induced by the usual inclusion
$\r\hookrightarrow\C=\r[x]/(x^{2}+1)$.

\subsection{Real algebraic varieties and morphisms between them}

For a real algebraic variety $X$, we denote by $X(\r)$ and $X(\C)$
the sets of $\r$-rational and $\C$-rational points of $X$
respectively. These are endowed in a natural way with the Euclidean
topology, locally induced by the usual Euclidean topologies on
$\A^{n}_\r(\r)\simeq\r^{2n}$ and $\A^{n}_\C(\C)\simeq\C^{n}$ respectively.
When $X$ is smooth, $X(\r)$ and $X(\C)$ can be further equipped with
natural structures of $\mathcal{C}^{\infty}$-manifolds. Every
morphism $f\colon X\rightarrow X'$ of real algebraic varieties
induces a continuous map $ X(\r)\rightarrow X'(\r)$ for the
Euclidean topologies, and an isomorphism of real algebraic varieties
$f\colon X\iso  X'$ induces a homeomorphism $X(\r)\iso X'(\r)$,
which is a diffeomorphism when $X$ and $X'$ are both smooth.

In the context of the study of real algebraic models of a
$\mathcal{C}^{\infty}$-manifold, it is natural to consider a broader
class of isomorphisms, induced by appropriate rational maps. Recall
that the domain of definition of a rational map
$\varphi:X\dashrightarrow Y$ between two $\mathbf{k}$-schemes $X$
and $Y$ is the largest open subset $\mathrm{dom}_{\varphi}$ on which
$\varphi$ is represented by a morphism. We say that $\varphi$ is
regular at a closed point $x$ if $x\in\mathrm{dom}_{\varphi}$. A
rational map $\varphi:X\dashrightarrow Y$ is called
\emph{birational} if it admits a rational inverse
$\psi:Y\dashrightarrow X$.

\begin{definition}\label{Defi:RegularRregular}{\rm
Let $\varphi\colon X\dashrightarrow X'$ be a rational map between
real algebraic varieties such that $X(\r)$ and $X'(\r)$ are not
empty.

\begin{enumerate}
\item[\rm (1)]
We say that $\varphi$ is $\r$-\emph{regular}, or that $\varphi$
induces a \emph{morphism} $X(\r)\to X'(\r)$ (that we will again
write $\varphi$), if the rational map $\varphi$ is regular at every
$\r$-rational point of $X$. Equivalently, the real locus $X(\r)$ of
$X$ is contained in the domain of definition of $\varphi$.
\item[\rm (2)]
We say that $\varphi$ is $\r$-\emph{biregular}, or that $\varphi$ is
an \emph{isomorphism} $X(\r)\iso X'(\r)$, if it is birational and
$\varphi$ and its inverse are $\r$-regular.
\item[\rm (3)]
A \emph{birational diffeomorphism} is an $\r$-biregular rational map
$\varphi\colon X\dashrightarrow X'$ between smooth real algebraic
varieties (or equivalently an isomorphism $X(\r)\iso X'(\r)$, where
$X$ and $X'$ are smooth).
\end{enumerate}}
\end{definition}

We can then consider the category most often used in real algebraic
geometry (for instance in \cite{BH,HM,BM}) whose objects are the
non-empty real loci $X(\r)$ of real algebraic varieties and whose
morphisms correspond to $\r$-regular rational
maps $X(\r)\to X'(\r)$. Note that the class of morphisms considered is in general much
larger than the class of usual regular maps. For instance, if $X$ is
a projective real algebraic surface, the group $\Aut(X)$ of
biregular automorphisms of $X$ is often quite small: its neutral
component is an algebraic group and has thus finite dimension. In
contrast, the group of birational diffeomorphisms $\Aut(X(\r))$ can
be very large. If $X$ is smooth and rational, then $\Aut(X(\r))$
acts infinitely transitively on $X(\r)$ \cite[Theorem 1.4]{HM}.
A similar behaviour can also happen if $X$ is not rational but only
geometrically rational \cite[Theorem 2]{BM}.

\subsection{Pairs and (logarithmic) Kodaira dimension}\label{Sec:PairsLogKodaira}

Recall that a \emph{Smooth Normal Crossing $($SNC$)$ divisor} $B$ on
a smooth surface $S$ defined over $\mathbf{k}$ is a curve $B$ on $S$
whose base extension $B_{\overline{\mathbf{k}}}$ to the algebraic
closure $\overline{\mathbf{k}}$ of $\mathbf{k}$ has smooth
irreducible components and ordinary double points only as
singularities. Equivalently, for every closed point $p\in
B_{\overline{\mathbf{k}}}\subseteq S_{\overline{\mathbf{k}}}$, the
local equations of the irreducible components of
$B_{\overline{\mathbf{k}}}$ passing through $p$ form a part of a
regular sequence in the maximal ideal
$\mathfrak{m}_{S_{\overline{\mathbf{k}}},p}$ of the local ring
$\mathcal{O}_{S_{\overline{\mathbf{k}}},p}$ of
$S_{\overline{\mathbf{k}}}$ at $p$.

\begin{definition}
{\rm A \emph{smooth SNC pair} $(V,B)$ is a pair consisting of a
smooth projective surface $V$ and an SNC divisor $B\subseteq V$ both
defined over $\k$.}
\end{definition}

By virtue of Nagata compactification \cite{Na62,Na63} and of
classical desingularization theorems, every smooth surface $S$
defined over $\k$ admits an open embedding $S\hookrightarrow(V,B)$
into a smooth complete (in fact, projective by virtue of Chow Lemma)
surface with possibly empty reduced SNC boundary divisor
$B=V\setminus S$, both defined over $\k$. Such a pair $(V,B)$ is
called a \emph{smooth SNC completion} of $S$.

The \emph{$($logarithmic$)$ Kodaira dimension} $\kappa(S)$ of $S$ is
then defined as the Iitaka dimension $\kappa(V,\omega_{V}(\log B))$
\cite{Ii70}, where \[\omega_{V}(\log
B)=(\det\Omega_{V/\k}^{1})\otimes\mathcal{O}_{V}(B)\simeq
\mathcal{O}_{V}(K_V+B),\] for any canonical
divisor\,$K_V$\,on\,$V$.\,More explicitly,
letting\,$R(V,B)=\bigoplus_{m\geq0}H^{0}(V,\omega_{V}(\log
B)^{\otimes m})$\,be the log-canonical ring of the smooth SNC pair
$(V,B)$, we have $\kappa(S)=\mathrm{tr}\deg_{\k}\mathcal{R}(V,B)-1$
if $H^{0}(V,\omega_{V}(\log B)^{\otimes m})\neq0$ for sufficiently
large $m$ and otherwise, if $H^{0}(V,\omega_{V}(\log B)^{\otimes
m})=0$ for every $m\geq1$, then we set by convention
$\kappa(S)=-\infty$ and we say for short that $\kappa(S)$ is
negative. The so-defined element $\kappa(S)\in\{-\infty,0,1,2\}$ is
independent of the choice of a smooth SNC completion $(V,B)$ of $S$
\cite{Ii77}, and it coincides with the usual notion of Kodaira
dimension in the case where $S$ is already complete. Furthermore, it
is invariant under arbitrary extensions of the base field $\k$, as a
consequence of the flat base change theorem
\cite[Proposition~III.9.3]{Ha77}. In particular a smooth real
surface $S$ and its complexification
$S_{\C}=S\times_{\mathrm{Spec}(\r)}\mathrm{Spec}(\C)$ have the same
Kodaira dimension.

\section{The real Kodaira dimension of open real surfaces}\label{Sec:RealKod}

\subsection{A variant of logarithmic Kodaira dimension}

For a smooth real surface $S$, the Kodaira dimension $\kappa(S)$ is
in general not a birational invariant, unless $S$ is complete: for
instance, the affine plane $\A_\r^{2}$ and the product of the punctured
affine line $\A_\r^{1}\setminus\{0\}$ with itself are
 birational to each other but have Kodaira dimensions $-\infty$
and $0$ respectively. We now introduce a variant of Kodaira
dimension more adapted to the study of equivalence classes of open
real surfaces up to birational diffeomorphisms.

\begin{notation}{\rm
Given a smooth SNC pair $(V,B)$ defined over $\r$, we denote by
$B_{\r}\subseteq B$ the union of all irreducible components $B_i$ of
$B_{\mathbb{C}}$ which are defined over $\r$ and such that
$B_{i}(\r)$ is infinite.}
\end{notation}

\begin{remark}\label{Rem:ZariskiClosureBr}{\rm
The Zariski closure of $B(\r)$ in $V$ is the union of $B_{\r}$ and
of finitely many isolated points of $B$.}
\end{remark}

\begin{definition}{\rm
Let $(V,B)$ be a smooth SNC pair defined over $\r$. We say that
$B_{\r}$ contains an \emph{imaginary loop} if there exists a pair of
distinct irreducible components $A$ and $A'$ of $B_{\mathbb{C}}$
defined over $\r$ and with infinite real loci, whose intersection
$A\cap A'$ contains a pair of conjugate non-real points, i.e. a $\mathbb{C}$-rational but not $\r$-rational point.}
\end{definition}

\begin{definition}\label{Defi:KappaR}\label{def:kappaR}{\rm
The \emph{real Kodaira dimension} of a smooth SNC pair $(V,B)$
defined over $\r$ is the element
\[\kappa_{\r}(V,B)=\kappa(V,\omega_{V}(\log B_{\r}))\in \{-\infty,
0,1,2\}.\]

\noindent If furthermore $B_{\r}$ has no imaginary loop, then we
define the real Kodaira dimension of $S=V\setminus B$ to be
\[\kappa_{\r}(S)=\kappa(S(\r))=\kappa_{\r}(V,B).\]}
\end{definition}

By definition, given a smooth SNC pair  $(V,B)$ defined over $\r$,
the curve $B_\r$ contains imaginary loops if and only if it has some
pairs of non-real singular points $q$ and $\overline{q}$. The following lemma provides
a simple procedure to eliminate imaginary loops.

\begin{lemma} \label{Rem:Imagloops}
Let $(V,B)$ be a smooth SNC pair defined over $\r$ and let
$Z=\{q_1,\overline{q}_1,\ldots, q_s,\overline{q}_s\}$ be the set of
non-real singular points of $B_{\r}$. Let $\tau\colon \hat{V}\to V$
be the blow-up of $Z$ and let $E=\sum_{i=1}^{s} \tau^{-1}(q_i)+
\tau^{-1}(\overline{q}_i)$ be its exceptional locus. Then the
following hold:
\begin{enumerate}\item[\rm (1)]\phantomsection\label{Imagloop1}
  $(\hat{V},\hat{B}=\tau^*(B)_{\mathrm{red}})$ is a smooth SNC pair defined over $\r$ for which $\hat{B}_\r$ has no imaginary loops and such that  $\tau$ induces an isomorphism $\hat{V}\setminus \hat{B}\to V\setminus B$.
\item[\rm (2)]\phantomsection\label{Imagloop2}
$ \kappa(\hat{V},\hat{B})=\kappa(V,B)$ and  $
\kappa_{\r}(\hat{V},\hat{B})\le \kappa_{\r}(V,B)$.
  \end{enumerate}
\end{lemma}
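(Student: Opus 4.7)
The plan is as follows. For part (1), I first observe that $Z$ is Galois-invariant (each non-real singularity is listed with its conjugate), so the blow-up $\tau$ is defined over $\r$ and yields a smooth surface $\hat V$. The divisor $\hat B = \tau^*(B)_{\mathrm{red}}$ is then SNC: locally at any point $q_i \in Z$, the SNC assumption on $B$ forces $q_i$ to be an ordinary double point of exactly two smooth components $A, A'$ of $B_\C$, both of which belong to $B_\r$ since $q_i$ is a singular point of $B_\r$; blowing up separates $\tilde A$ and $\tilde A'$ and makes them meet the $(-1)$-curve $E_i=\tau^{-1}(q_i)$ transversally at distinct points, which is an SNC configuration. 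The isomorphism $\hat V \setminus \hat B \to V \setminus B$ is then automatic, since $\tau$ is an isomorphism outside $E$ and $E \subseteq \hat B$.

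To finish (1), I would check that $\hat B_\r$ has no imaginary loops. Each exceptional component $\tau^{-1}(q_i)$ (resp.\ $\tau^{-1}(\overline q_i)$) is defined only over the residue field $\C$ of the non-real point $q_i$ (resp.\ $\overline q_i$), so these do not contribute to $\hat B_\r$. Thus $\hat B_\r$ is precisely the strict transform of $B_\r$. Now two distinct components of $\hat B_\r$ can only meet at points lying over singular points of $B_\r$ outside $Z$, because strict transforms of distinct branches through $q_i$ meet $E_i$ at distinct points and blow-ups do not create new intersections. Since $Z$ by construction exhausts the non-real singularities of $B_\r$, every remaining intersection point is real, so $\hat B_\r$ has no imaginary loop.

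For (2), the equality $\kappa(\hat V, \hat B) = \kappa(V, B)$ is immediate from the fact, recalled in Section~\ref{Sec:PairsLogKodaira}, that the logarithmic Kodaira dimension depends only on the open surface $V\setminus B \simeq \hat V \setminus \hat B$ and not on the chosen smooth SNC completion. For the inequality, I would combine the blow-up formula $K_{\hat V} = \tau^*K_V + E$ with the multiplicity computation $\mathrm{mult}_{q_i}(B_\r)=2$ (two smooth branches through each $q_i$), which gives $\tau^*B_\r = \hat B_\r + 2E$ and hence
\[
K_{\hat V} + \hat B_\r \;=\; \tau^*(K_V + B_\r) - E.
\]
Consequently $\omega_{\hat V}(\log \hat B_\r)^{\otimes m} \simeq \tau^*\omega_V(\log B_\r)^{\otimes m}\otimes \mathcal O_{\hat V}(-mE)$, and the projection formula together with $\tau_*\mathcal O_{\hat V}(-mE) = \mathcal I_Z^m$ yields
\[
H^0\bigl(\hat V, \omega_{\hat V}(\log \hat B_\r)^{\otimes m}\bigr) \;=\; H^0\bigl(V, \omega_V(\log B_\r)^{\otimes m}\otimes \mathcal I_Z^m\bigr),
\]
which embeds as a subspace of $H^0(V,\omega_V(\log B_\r)^{\otimes m})$ for every $m\ge 0$. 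Since the Iitaka dimension is governed by the growth rate of these plurigenera, the inequality $\kappa_\r(\hat V, \hat B)\le \kappa_\r(V,B)$ follows. The main delicate points are the SNC verification and carefully tracking which irreducible components of $\hat B$ actually belong to $\hat B_\r$; once these are settled, the computation reduces to routine divisor calculus on a point blow-up.
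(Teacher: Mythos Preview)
Your argument is correct and follows essentially the same route as the paper's: identify $\hat{B}_\r$ with the strict transform of $B_\r$ (since the exceptional components over the non-real points $q_i,\overline{q}_i$ are not defined over $\r$), then use the divisor identity $K_{\hat V}+\hat B_\r=\tau^*(K_V+B_\r)-E$ to deduce the inequality. The only cosmetic differences are that the paper obtains the equality $\kappa(\hat V,\hat B)=\kappa(V,B)$ directly from the companion identity $K_{\hat V}+\hat B=\tau^*(K_V+B)$ rather than by citing invariance of the log-Kodaira dimension, and it concludes the inequality simply from the effectivity of $E$ rather than via the projection formula; neither difference is substantive.
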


\begin{proof}
\hyperref[Imagloop1]{(1)}: As $B$ is SNC, the morphism $\tau$ only
blows-up ordinary double points of $B$, so $\hat{B}$ is again SNC.
Every irreducible curve on $\hat{V}_\C$  contracted by $\tau$ is not defined
over $\r$ and does not intersect its conjugate, so does not contain
any real point. This implies that $\hat{B}_{\r}$ is the strict
transform of $B_{\r}$. Every singular $\C$-rational point of $B_{\r}$ which
was not real has been blown-up, and every singular $\C$-rational point of
$\hat{B}_{\r}$ is an $\r$-rational point. Hence, $\hat{B}_{\r}$ has
no imaginary loop. The fact that $\tau$ induces an isomorphism
$\hat{V}\setminus \hat{B}\to V\setminus B$ follows from the fact
that $\hat{B}=\tau^*(B)_{\mathrm{red}}$ and that all points blown-up
by $\tau$ and all exceptional divisors of $E$ are contained in $B$
and $\hat{B}$ respectively.

\hyperref[Imagloop2]{(2)}: Since the points blown-up by $\tau$ are
ordinary double points of $B_{\r}$ hence of $B$, we have
$\hat{B}=\tau^*B-E$ whereas $\hat{B}_{\r}=\tau^*B_{\r}-2E$ because
$E$ does not contain any real point. Denoting by $K_{\hat{V}}$ and
$K_V$ the canonical divisors on $\hat{V}$ and $V$ respectively, we
have the ramification formula $K_{\hat{V}}=\tau^*K_V+E$  for $\tau$.
This yields the two equalities
\[K_{\hat{V}}+\hat{B}=\tau^*(K_V+B)\text{ and }K_{\hat{V}}+\hat{B}_{\r}=\tau^*(K_V+B_{\r})-E.\]
The first equality gives $ \kappa(\hat{V},\hat{B})=\kappa(V,B)$. The
second equality gives $ \kappa_{\r}(\hat{V},\hat{B})\le
\kappa_{\r}(V,B)$, since $E$ is effective. \qed
\end{proof}

The following example shows that the inequality of
Lemma~\ref{Rem:Imagloops}\,\hyperref[Imagloop2]{(2)} can be strict.

\begin{example}\label{Exa:DifferentKappaR}{\rm
Take $V=\p^2_\r$ and $B=L+C$, where $L\simeq \p^1_\r$ is the line of equation $x=0$
and $C$ is the smooth conic of equation $x^2-y^2-z^2=0$. Then,
$B_\r=B$ has imaginary loops, as the points $q=[0:1:i]$,
$\overline{q}=[0:1:-i]$ are singular points of $B$. With the
notation of Lemma~\ref{Rem:Imagloops}, the blow-up $\tau\colon
\hat{V}\to \p^2_\r$ of these two points yields an SNC pair
$(\hat{V},\hat{B}=\tau^*(B)_{\mathrm{red}})$ such that
$\hat{B}=\tilde{L}+\tilde{C}+E$, where  $E=E_q+E_{\overline{q}}$ is
the sum of the exceptional divisors over $q$ and $\overline{q}$
respectively, and where $\tilde{L}$, $\tilde{C}$ are the strict
transforms of $L$ and $C$. We get $\hat{B}_\r=\tilde{L}+\tilde{C}$.

The canonical divisor of $V=\p^2_\r$ satisfies $K_V= -L-C$ so that
$K_V+B_{\r}=0$. On the other hand,  by the proof of
Lemma~\ref{Rem:Imagloops}\,\hyperref[Imagloop2]{(2)}, we have
$K_{\hat{V}}+\hat{B}_\r=\tau^*(K_V+B_{\r})-E=-E$. Hence
$\kappa_\r(V,B)=0$ whereas $\kappa_\r(\hat{V},\hat{B})=-\infty$.}
\end{example}

\medskip
The aim of this section is to show that the definition of
$\kappa(S(\r))$ (or $\kappa_{\r}(S)$) only depends on the
birationnal diffeomorphism  class of $S(\r)$, or equivalently of the
real surface $S$, up to birational diffeomorphism.

\medskip

The following notion is natural to compare two possible pairs, up to
birational diffeomorphism.

\begin{definition}\label{Defi:BirPairs}{\rm
Let $(V,B)$ and $(V',B')$ be two smooth SNC pairs defined over $\r$.
A \emph{birational map of pairs} $\varphi\colon (V,B)\dashrightarrow
(V',B')$ is a birational map $V\dashrightarrow V'$ defined over $\r$
inducing an isomorphism \[(V\setminus B)(\r)\iso (V'\setminus
B')(\r)\] (or equivalently inducing a birational diffeomorphism from
$V\setminus B$ to $V'\setminus B'$).}
\end{definition}

\begin{example}\label{Exa:Links}{\rm
Let $(V,B)$ and $(V',B')$ be two smooth SNC pairs defined over $\r$,
and let $\tau\colon V\to V'$ be a birational morphism, defined over
$\r$. In each of the following cases, $\tau$ yields a birational map
of pairs $(V,B)\dashrightarrow (V',B')$.
\begin{enumerate}
\item[\rm (1)] \phantomsection\label{ExaIso}
If $\tau$ is an isomorphism $V\iso V'$ such that
$\varphi(B(\r))=B'(\r)$.
\item[\rm (2)] \phantomsection\label{ExaReal}
If $\tau$ is the blow-up of a point $q\in B'(\r)$ and
$B=\varphi^{-1}(B')_{\mathrm{red}}$.
\item[\rm (3)] \phantomsection\label{ExaNonreal}
If $\tau$ is the blow-up of a pair of conjugate non-real points
$q,\overline{q}\in V'(\C)$ and $B$ is the strict transform of $B'$.
(Here the exceptional locus does not contain any real point.)
\end{enumerate}}
\end{example}

\begin{remark}{\rm
Another example of simple birational map of pairs $\tau\colon
(V,B)\to (V',B')$ is given as follows: we take $\tau$ to be the
blow-up of a pair of conjugate non-real points $q,\overline{q}\in
V'(\C)$ and $B=\varphi^{-1}(B')_{\mathrm{red}}$.

Denoting by $E\subseteq V$ the exceptional locus of $\tau$ (which is
the disjoint union of two conjugate imaginary $(-1)$-curves and does
not contain any real point) and by $\tilde{B}$ the strict transform
of $B'$, we get $B=\tilde{B}+E$. We can then decompose the
birational map $\tau\colon (V,B)\to (V',B')$ as the composition of
$\mathrm{id}_V\colon (V,B)\to (V,\tilde{B})$ with the birational
morphism $\tau\colon (V,\tilde{B})\to (V',B')$, which are examples
of type \hyperref[ExaIso]{(1)} and \hyperref[ExaNonreal]{(3)}
respectively.}
\end{remark}

\begin{lemma}\label{Lem:Decomposition}
Let $\varphi\colon (V,B)\dashrightarrow (V',B')$ be a birational map
of smooth SNC pairs. Then, there exists a sequence of birational
maps of pairs
\[(V,B)=(V_0,B_0)\stackrel{\varphi_1}{\dashrightarrow}(V_1,B_1)\stackrel{\varphi_2}{\dashrightarrow}\ \cdots \ \stackrel{\varphi_{n-1}}{\dashrightarrow} (V_{n-1},B_{n-1})\stackrel{\varphi_n}{\dashrightarrow} (V_n,B_n)=(V',B')\]
such that $\varphi=\varphi_n\circ\dots \circ \varphi_1$ and such
that for each $i\in \{1,\dots,n\}$, either $\varphi_i$ or
$(\varphi_i)^{-1}$ is of one of three types
\hyperref[ExaIso]{(1)}-\hyperref[ExaReal]{(2)}-\hyperref[ExaNonreal]{(3)}
of Example~$\ref{Exa:Links}$.

Moreover, if $B_\r$ and $B'_\r$ have no imaginary loop, then we can
assume the same for $(B_i)_\r$, for $i=1,\dots,n$.
\end{lemma}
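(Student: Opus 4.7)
The plan is to resolve the birational map $\varphi$ by a smooth real surface dominating both $V$ and $V'$, and then track the logarithmic boundary along every blow-up in the resolution, checking that each elementary step falls into one of the three types of Example~\ref{Exa:Links}.

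First, I would apply resolution of indeterminacies for birational maps between smooth projective real surfaces: there exists a smooth projective real surface $W$ with two birational morphisms $\sigma\colon W\to V$ and $\sigma'\colon W\to V'$ satisfying $\varphi\circ\sigma=\sigma'$, each a composition of blow-ups whose centers are either an $\r$-rational point or a pair of conjugate non-real closed points. Taking such a resolution to be minimal and using that $\varphi$ is $\r$-regular on $(V\setminus B)(\r)$ (so that all its real indeterminacy points already lie in $B$), a straightforward induction on the stages of $\sigma$ shows that every real blow-up center lies in the reduced preimage of $B$ at the corresponding intermediate stage, and likewise for $\sigma'$ with respect to $B'$.

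Next, set $\tilde B=\sigma^{-1}(B)_{\mathrm{red}}$ and $\tilde B'=(\sigma')^{-1}(B')_{\mathrm{red}}$ on $W$. Both pairs $(W,\tilde B)$ and $(W,\tilde B')$ are smooth SNC, since the reduced preimage of an SNC divisor on a smooth surface under a single-point blow-up is again SNC. Factoring $\sigma=\sigma_1\circ\cdots\circ\sigma_n$ into single blow-ups and endowing each intermediate surface $W_i$ with the reduced preimage $B_i$ of $B$, every step $W_{i+1}\to W_i$ is of one of three forms: the blow-up of an $\r$-rational point of $B_i$, which is exactly a move of type~(2) of Example~\ref{Exa:Links}; the blow-up of a conjugate pair of non-real points disjoint from $B_i$, which is a move of type~(3); or the blow-up of a conjugate pair of non-real points contained in $B_i$, which decomposes as a move of type~(3) followed by one of type~(1) as in the remark after Example~\ref{Exa:Links}. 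Reversing the $\sigma'$-chain yields an analogous sequence of blow-downs whose inverses are of the same three types. To bridge $(W,\tilde B)$ and $(W,\tilde B')$, note that $\sigma$ and $\sigma'$ restrict to bijections $(W\setminus\tilde B)(\r)\to (V\setminus B)(\r)$ and $(W\setminus\tilde B')(\r)\to (V'\setminus B')(\r)$ (the real blow-up centers all lying in $B$, respectively $B'$), which are compatible with the $\r$-biregular map $\varphi$; this forces $\tilde B(\r)=\tilde B'(\r)$, so that $\mathrm{id}_W\colon (W,\tilde B)\dashrightarrow (W,\tilde B')$ is itself a birational map of pairs of type~(1). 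Concatenating the $\sigma$-chain upward, this identity, and the reverse $\sigma'$-chain produces the desired decomposition.

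For the final assertion, assume $B_\r$ and $B'_\r$ have no imaginary loop. Blowing up either a real point or a conjugate pair of non-real points cannot create a non-real intersection point between two real components of the logarithmic divisor having infinite real locus (real SNC points have real tangent directions, and exceptional divisors over non-real pairs contain no real point and no real component). Hence the only moves in the above chain capable of introducing an imaginary loop are contractions, inverse to type~(3) moves, of conjugate pairs of exceptional $(-1)$-curves that simultaneously meet two real components of infinite real locus. Wherever such a problematic contraction is about to be performed, I would insert a short local block of auxiliary moves of types~(3) and~(1) replicating the procedure of Lemma~\ref{Rem:Imagloops} at the non-real singular point, so as to separate the two offending components before contracting. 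These insertions are supported on non-real loci and therefore leave the overall composition unchanged on real loci. The main obstacle is exactly this last bookkeeping step: one must check that these local refinements can be inserted consistently at every stage without destroying the SNC condition already obtained, which is a finite combinatorial verification relying on the local nature of blow-ups and on the hypothesis that the two endpoints $B_\r$ and $B'_\r$ carry no imaginary loop to begin with.
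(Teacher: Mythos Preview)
Your resolution-and-factorisation strategy is essentially the paper's: take a minimal common resolution $W$, factor each side into single-center blow-ups, and check that every elementary step is of type (1), (2) or (3) (or decomposes as (3) followed by (1) when a non-real pair lies on the boundary). The paper presents this inductively, peeling off one base point at a time, but the content is the same and your version is correct.

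Where you diverge from the paper is in the final paragraph, and there you have made your life harder than necessary. You correctly observe that a blow-up (at a real point or at a non-real conjugate pair) cannot create a non-real singular point of the real part of the boundary, so the no-imaginary-loop property propagates upward along the $\sigma$-chain from $(V,B)$ to $(W,\tilde B)$. But the very same observation applies to the $\sigma'$-chain: you defined the intermediate boundaries there as reduced preimages of $B'$, so reading that chain upward from $(V',B')$ it is again a sequence of blow-ups, and the no-imaginary-loop property propagates from $(V',B')$ to $(W,\tilde B')$ and to every intermediate pair, including the extra ones inserted by the (3)+(1) decomposition (whose real part coincides with that of the reduced preimage since the exceptional divisors over non-real pairs contribute no real component). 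Consequently, no contraction in your chain ever lands on a pair with an imaginary loop, and there is nothing to repair.

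Your proposed remedy of ``inserting a short local block of auxiliary moves replicating Lemma~\ref{Rem:Imagloops}'' is therefore superfluous; worse, as written it is not clearly well-defined, since Lemma~\ref{Rem:Imagloops} blows up points that would only exist \emph{after} the offending contraction, so it is unclear what you would actually insert and how it would still compose to $\varphi$. The clean fix is simply to delete that paragraph and replace it by the symmetric application of your blow-up observation to the $\sigma'$-side. This is exactly what the paper does, only phrased inductively: at each step it chooses the boundary (reduced preimage for a real center, strict transform for a non-real pair) so that the no-imaginary-loop condition is visibly preserved.
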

\begin{proof}
By definition, $\varphi\colon V\dasharrow V'$ is a birational map
defined over $\r$, inducing an isomorphism between $(V\setminus B)(\r)$ 
and $(V'\setminus B')(\r)$.

If $\varphi$ is an isomorphism $V\iso V'$, then it sends $B(\r)$
onto $B'(\r)$ and is thus of the type of
Example~\ref{Exa:Links}\,\hyperref[ExaIso]{(1)}. Otherwise, we can
take a minimal resolution of the indeterminacies of $\varphi$ given
by
\[\xymatrix@R=3.5mm{
      & W\ar[dl]_{\tau}\ar[dr]^{\tau'} \\
    V \ar@{-->}[rr]^{\varphi} && V' \\
    }
\]
where $W$ is a smooth projective real surface and $\tau$ and $\tau'$
are birational morphisms defined over $\r$. Recall that since
$\varphi$ and $\varphi^{-1}$ are  defined over $\r$, the union of
their base-points, including infinitely near ones, is defined over
$\r$, hence consists of either real points or pair of conjugate
non-real points. The minimality assumption implies in particular
that $\tau$ and $\tau'$ are the blow-ups of the base-points of
$\varphi$ and $\varphi^{-1}$ respectively. This gives back the
classical decomposition of $\tau$ and $\tau'$ into simple blow-ups
(one real or a pair of conjugate non-real points) and thus the real
Zariski strong factorisation of $\varphi$, as explained for
instance in \cite[Chapter~II, Proposition~6.4]{Sil89}.

We proceed by induction on the number of such points, the case where
there is no base-point being
\ref{Exa:Links}\,\hyperref[ExaIso]{(1)}.

If $q\in V(\r)$ is a base-point of $\varphi$, then $q$ belongs to
$B(\r)$, since $\varphi$ induces an isomorphism between $(V\setminus
B)(\r)$ and $(V'\setminus B')(\r)$. We can write $\tau$ as
$\tau=\tau_q\circ \hat{\tau}$, where $\tau_q\colon \hat{V}\to V$ is
the blow-up of $q$ and $\hat{\tau}\colon W\to \hat{V}$ is a
birational morphism defined over $\r$. Writing
$\hat{B}=(\tau_q)^{-1}(B)_{\mathrm{red}}$, the birational map
$\tau_q$ yields a birational maps of pairs
$(\hat{V},\hat{B})\dasharrow (V,B)$ of type
\ref{Exa:Links}\,\hyperref[ExaReal]{(2)}. Moreover, if $B_\r$ has no
imaginary loop, the same holds for
$\hat{B}_\r=((\tau_q)^{-1}(B_\r))_{\mathrm{red}}$.  As
$\varphi\circ\tau_q\colon (\hat{V},\hat{B})\dasharrow (V',B')$ is
again a birational maps of pairs, whose minimal resolution has less
base-points, we conclude by induction.

The same argument works with a point $q\in V'(\r)$ which is a
base-point of $\varphi^{-1}$. We can thus assume that no point of
$V(\r)$ or $V'(\r)$ is a base-point of $\varphi$ or $\varphi^{-1}$.

If $\tau$ is not an isomorphism, there is a pair of conjugate
non-real points $q,\overline{q}\in V(\C)$, both base-points of
$\varphi$, blown-up by $\tau$. As before, we write $\tau$ as
$\tau=\tau_q\circ \hat{\tau}$, where $\tau_q\colon \hat{V}\to V$ is
the blow-up of $q$ and $\overline{q}$, which is thus defined over
$\r$. Then $\hat{\tau}\colon W\to \hat{V}$ is a birational morphism
defined over $\r$. The strict transform $\hat{B}$ of $B$ on
$\hat{V}$ is then again an SNC-divisor, defined over $\r$, and
$\tau_q$ induces a birational map of pairs
$(\hat{V},\hat{B})\dasharrow (V,B)$ of type
\ref{Exa:Links}\,\hyperref[ExaNonreal]{(3)}. Moreover, if $B_\r$ has
no imaginary loop, the same holds for $\hat{B}_\r$, which is the
strict transform of $B_\r$. As before, the result follows by
induction. The same works when $\tau'$ is not a regular morphism.
\qed
\end{proof}

\begin{lemma}
\label{lem:Invariance-kappa} Let $\varphi\colon
(V,B)\dashrightarrow(V',B')$ be a birational map of smooth SNC pairs
$(V,B)$ and $(V',B')$ defined over $\r$, such that neither $B_\r$
nor $B'_\r$ has an imaginary loop. Then for every $m,n\in
\mathbb{Z}$ with $m\ge \lvert n\rvert$, the map $\varphi$ induces an
isomorphism
\[\varphi_{*}\colon H^{0}(V,m K_{V}+n B_{\r})\iso H^{0}(V',m K_{V'}+n B'_{\r}).\]
In particular, $\kappa_{\r}(V,B)=\kappa_{\r}(V',B')$.\end{lemma}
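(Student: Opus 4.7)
The strategy is to combine the decomposition result of Lemma~\ref{Lem:Decomposition} with a direct calculation on each of the elementary pieces produced there.

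First, I would invoke Lemma~\ref{Lem:Decomposition} to factor $\varphi$ as $\varphi = \varphi_n \circ \cdots \circ \varphi_1$ with each $\varphi_i$ (or its inverse) of one of the three types \hyperref[ExaIso]{(1)}--\hyperref[ExaNonreal]{(3)} of Example~\ref{Exa:Links}, and moreover with each intermediate $(B_i)_\r$ free of imaginary loops. It then suffices to prove the asserted isomorphism $\varphi_*$ for a single elementary step, since the intermediate pairs satisfy the same hypotheses and the isomorphisms compose. The final statement about $\kappa_\r$ will follow by specialising to $n = m \geq 0$ and passing to Iitaka dimensions in $R(V, \omega_V(\log B_\r))$.

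Type~\hyperref[ExaIso]{(1)} is essentially immediate, since an isomorphism $V \iso V'$ sending $B(\r)$ to $B'(\r)$ sends the Zariski closure of $B(\r)$, and hence its maximal divisorial part $B_\r$ (by Remark~\ref{Rem:ZariskiClosureBr}), to $B'_\r$. For types~\hyperref[ExaReal]{(2)} and \hyperref[ExaNonreal]{(3)}, I would use the ramification formula $K_V = \tau^* K_{V'} + F$, where $F = E$ in case~\hyperref[ExaReal]{(2)} and $F = E + \overline{E}$ in case~\hyperref[ExaNonreal]{(3)}. In case~\hyperref[ExaReal]{(2)} the exceptional $E$ is a real $\p^1$ and thus enters $B_\r$, yielding $B_\r = \tau^* B'_\r + (1 - \mu) E$ where $\mu \in \{0, 1, 2\}$ is the multiplicity of $B'_\r$ at the blown-up real point; in case~\hyperref[ExaNonreal]{(3)}, $B_\r$ is the strict transform of $B'_\r$ and $B_\r = \tau^* B'_\r - \mu (E + \overline{E})$, with $\mu \in \{0, 1\}$ because the no-imaginary-loops hypothesis on $B'_\r$ rules out $\mu = 2$ (two components of $B'_\r$ meeting at a non-real point is precisely an imaginary loop). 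Combining, in both cases one gets
\[
mK_V + nB_\r \;=\; \tau^*\bigl(mK_{V'} + nB'_\r\bigr) + c\, F,
\]
with $c = m + n(1 - \mu)$ in case~\hyperref[ExaReal]{(2)} and $c = m - n\mu$ in case~\hyperref[ExaNonreal]{(3)}.

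The decisive point is that the hypothesis $m \geq |n|$ forces $c \geq 0$ in every sub-case: the possible values of $c$ in case~\hyperref[ExaReal]{(2)} are $m+n$, $m$, and $m-n$ (for $\mu = 0, 1, 2$), and in case~\hyperref[ExaNonreal]{(3)} they are $m$ and $m-n$. Once $c \geq 0$, the projection formula together with the standard identity $\tau_* \mathcal{O}_V(cF) = \mathcal{O}_{V'}$ (valid for $c \geq 0$ when $F$ is exceptional for a blow-up of smooth points, by Hartogs on the normal surface $V'$) gives $\tau_* \mathcal{O}_V(mK_V + nB_\r) = \mathcal{O}_{V'}(mK_{V'} + nB'_\r)$, whence $\tau^*$ induces the required isomorphism on global sections. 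The main routine obstacle lies in the bookkeeping of $B_\r$ across a blow-up --- checking carefully that in case~\hyperref[ExaReal]{(2)} the exceptional real $\p^1$ must be incorporated in $B_\r$, while in case~\hyperref[ExaNonreal]{(3)} the two conjugate imaginary exceptional curves must not, and that the multiplicity bound $\mu \leq 1$ at non-real centres is truly enforced by the absence of imaginary loops in $B'_\r$.
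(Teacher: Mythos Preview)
Your proof is correct and follows essentially the same route as the paper's: reduce via Lemma~\ref{Lem:Decomposition} to the three elementary types, handle type~\hyperref[ExaIso]{(1)} via the Zariski closure remark, and for types~\hyperref[ExaReal]{(2)}--\hyperref[ExaNonreal]{(3)} compute $mK_V+nB_\r=\tau^*(mK_{V'}+nB'_\r)+cF$ with $c\geq 0$ from $m\geq|n|$, using the no-imaginary-loop hypothesis exactly to bound the multiplicity at non-real centres by $1$. The only cosmetic difference is the last step, where the paper argues directly that any effective divisor in $|\tau^*(mK_{V'}+nB'_\r)+cF|$ must contain $cF$ (via $F\cdot D<0$ and induction), whereas you invoke the projection formula and $\tau_*\mathcal{O}_V(cF)=\mathcal{O}_{V'}$; these are equivalent standard arguments.
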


\begin{proof}
Applying Lemma~\ref{Lem:Decomposition}, we can assume that $\varphi$
is of one of the three cases
\hyperref[ExaIso]{(1)}-\hyperref[ExaReal]{(2)}-\hyperref[ExaNonreal]{(3)}
of Example~$\ref{Exa:Links}$.

In case \hyperref[ExaIso]{(1)}, $\varphi$ is an isomorphism $V\iso
V'$ that sends $B(\r)$ isomorphically onto $B'(\r)$. It  thus maps
the Zariski closure of $B(\r)$ isomorphically onto that of $B'(\r)$.
This implies that $\varphi(B_\r)=B'_\r$ (see
Remark~\ref{Rem:ZariskiClosureBr}). This achieves the proof in this
case.

We then do the two cases
\hyperref[ExaReal]{(2)}-\hyperref[ExaNonreal]{(3)}, and denote, in
both cases, by $E\subseteq V$ the divisor contracted by $\varphi$.

In case \hyperref[ExaReal]{(2)}, $\varphi$ is the blow-up of a point
$q\in B'(\r)$, $B=\varphi^{-1}(B')_{\mathrm{red}}$ and
$E=\varphi^{-1}(q)$.

 In case \hyperref[ExaNonreal]{(3)}, $\varphi$ is the blow-up of a pair of conjugate non-real points $q,\overline{q}\in V'(\C)$, $B$ is the strict transform of $B'$ and $E=\varphi^{-1}(q)+\varphi^{-1}(\overline{q})$. As $B'_\r$ is an SNC-divisor with no imaginary loop, the points $q,\overline{q}$ cannot be singular points of $B'_\r$.

 We find respectively
\[\begin{array}{llll}
\text{\hyperref[ExaReal]{(2)}:}& B_{\r}&=&\begin{cases}
\varphi^{*}(B'_{\r})+E & \textrm{if }q\in B'(\r)\setminus B'_{\r}(\r),\\
\varphi^{*}(B'_{\r}) & \textrm{if }q\textrm{ is a simple point of }B'_{\r},\\
\varphi^{*}(B'_{\r})-E & \textrm{if }q\textrm{ is a double point of
}B'_{\r},
\end{cases} \vspace{0.1cm}\\
\text{\hyperref[ExaNonreal]{(3)}:}& B_{\r}&=&\begin{cases}
\varphi^{*}(B'_{\r}) & \textrm{if }q\notin B'_{\r}(\C),\\
\varphi^{*}(B'_{\r})-E & \textrm{if }q\in B'_{\r}(\C).
\end{cases}\end{array}\]
Since $K_V=\varphi^{*}(K_{V'})+E$, we obtain
\[m K_V+nB_{\r}=m(\varphi^{*}(K_{V'})+E)+n(\varphi^{*}(B'_\r)+\epsilon E) =\varphi^{*}(m K_{V'}+ n B'_\r) +\delta E,\]
where $\epsilon\in \{-1,0,1\}$ and $\delta=m+n\epsilon \ge m-\lvert
n\rvert\ge  0$ as $m\ge \lvert n\rvert$ by hypothesis. As a
consequence, the natural inclusion
\[
H^{0}(V',m K_{V'}+n B'_{\r})\simeq H^{0}(V,\varphi^{*}(m K_{V'}+n
B'_{\r}))\hookrightarrow H^{0}(V,\varphi^{*}(m K_{V'}+ n B'_\r)
+\delta E)
\]
is a bijection.

Indeed, for each integer $r\ge 0$, an effective divisor $D$
equivalent to  $\varphi^{*}(m K_{V'}+ n B'_\r) +r E$ is equal to
$\tilde{D}+r E$ for some effective divisor $\tilde{D}$ equivalent to
$\varphi^{*}(m K_{V'}+ n B'_\r)$. This is clear for $r=0$, and for
$r>0$ we just compute $E\cdot D=r E^2\le -r<0$ and obtain that $D-E$
is effective, which yields the result by induction.

The case where $m=n$ shows that
$\kappa_{\r}(V',B')=\kappa_{\r}(V,B)$. \qed
\end{proof}

As a consequence of Lemma~\ref{lem:Invariance-kappa}, we obtain:

\begin{corollary}\label{RealKodIsIvariant}
For a smooth real affine surface $S$, $\kappa_{\r}(V,B)$ is
independent of the choice of a smooth SNC completion $(V,B)$ of $S$
defined over $\r$ and such that $B_\r$ does not have an imaginary
loop. The real Kodaira dimension $\kappa_{\r}(S)\in
\{-\infty,0,1,2\}$ of $S$ introduced in
Definition~$\ref{Defi:KappaR}$ is thus a well-defined invariant of
$S(\r)$.
\end{corollary}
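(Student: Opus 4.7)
The plan is to deduce the corollary as an essentially immediate consequence of Lemma~\ref{lem:Invariance-kappa}, by observing that both the independence-of-completion assertion and the birational diffeomorphism invariance amount to exhibiting a birational map of pairs between suitable SNC completions with no imaginary loops, to which that lemma applies.

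For the independence of the completion, I would take two smooth SNC completions $(V,B)$ and $(V',B')$ of $S$ defined over $\r$, each with $B_{\r}$ and $B'_{\r}$ free of imaginary loops. The identity automorphism of $S$ extends uniquely to a birational map $\varphi\colon V\dashrightarrow V'$ defined over $\r$, and by construction $\varphi$ restricts to the identity on $(V\setminus B)(\r)=S(\r)=(V'\setminus B')(\r)$, in particular to an isomorphism between these real loci. Hence $\varphi$ is a birational map of pairs in the sense of Definition~\ref{Defi:BirPairs}, and Lemma~\ref{lem:Invariance-kappa} yields $\kappa_{\r}(V,B)=\kappa_{\r}(V',B')$. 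This shows that $\kappa_{\r}(S)$ does not depend on the choice of such a completion.

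To extract from this the stronger statement that $\kappa_{\r}(S)$ is an invariant of $S(\r)$ (up to birational diffeomorphism, as the category of the section specifies), I would take a birational diffeomorphism $\psi\colon S\dashrightarrow S'$ between smooth real affine surfaces and pick arbitrary smooth SNC completions $(V,B)$ of $S$ and $(V',B')$ of $S'$ defined over $\r$. Applying Lemma~\ref{Rem:Imagloops} to each, I may assume that neither $B_{\r}$ nor $B'_{\r}$ contains an imaginary loop: the blow-ups involved do not modify $S$ or $S'$, while they cannot raise $\kappa_{\r}$ (and, by the independence already established, represent the same $\kappa_{\r}(S)$, $\kappa_{\r}(S')$ anyway). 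The birational diffeomorphism $\psi$ then extends to a birational map $V\dashrightarrow V'$ defined over $\r$ whose restriction to the real loci is $\psi$ itself, hence an isomorphism; this is again a birational map of pairs, so Lemma~\ref{lem:Invariance-kappa} delivers $\kappa_{\r}(S)=\kappa_{\r}(S')$.

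I expect no substantial obstacle: all of the real work has been done in Lemma~\ref{Lem:Decomposition} (reduction to elementary birational links) and Lemma~\ref{lem:Invariance-kappa} (numerical verification that $mK+nB_{\r}$ is preserved when $m\ge|n|$). The corollary is simply the formal observation that, both for two completions of the same $S$ and for two birationally diffeomorphic surfaces $S,S'$, the canonical extension of the given identification qualifies as a birational map of pairs satisfying the hypothesis of Lemma~\ref{lem:Invariance-kappa}, while Lemma~\ref{Rem:Imagloops} guarantees that the no-imaginary-loop hypothesis can always be arranged.
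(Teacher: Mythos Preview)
Your proposal is correct and matches the paper's approach: the corollary is stated there simply as ``a consequence of Lemma~\ref{lem:Invariance-kappa}'' with no further argument, and you have accurately spelled out the two applications of that lemma (identity map between two completions of the same $S$, and extension of a birational diffeomorphism to a birational map of pairs) that justify it. The appeal to Lemma~\ref{Rem:Imagloops} to arrange the no-imaginary-loop hypothesis is also the intended mechanism.
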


The following result summarises immediate consequences of the
definition and Lemma \ref{lem:Invariance-kappa}.

\begin{proposition}
\label{Prop:KappaR-properties} The real Kodaira dimension
$\kappa_{\r}(S)$ of a smooth real surface $S$ enjoys the following
properties:
\begin{enumerate}
\item[\rm (1)] \phantomsection\label{kapparInvariant}
$\kappa_{\r}(S)=\kappa_{\r}(S')$ for every smooth surface $S'$
defined over $\r$ birationally diffeomorphic to $S$.
\item[\rm (2)] \phantomsection\label{kapparsmallerkappa}
$\kappa_{\r}(S)\leq\kappa(S)$ with equality if $S$ admits a smooth
projective SNC completion $(V,B)$ defined over $\r$ such that
$B=B_\r$ has no imaginary loop.
\end{enumerate}
\end{proposition}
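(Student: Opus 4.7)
My plan is to derive both parts directly from the tools developed earlier in the section, namely Lemma \ref{Rem:Imagloops}, Lemma \ref{lem:Invariance-kappa}, and Corollary \ref{RealKodIsIvariant}, with essentially no new computation.

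For part \hyperref[kapparInvariant]{(1)}, I would start from a birational diffeomorphism $\varphi\colon S\dashrightarrow S'$ and pick smooth SNC completions $(V,B)$ of $S$ and $(V',B')$ of $S'$ defined over $\r$. If $B_\r$ or $B'_\r$ has imaginary loops, I would first apply Lemma \ref{Rem:Imagloops} to each completion independently: blowing up the non-real singular points of the boundary does not change the open part, so the resulting pairs $(\hat V,\hat B)$ and $(\hat V',\hat B')$ are again smooth SNC completions of $S$ and $S'$, now with no imaginary loops on their real parts. The birational diffeomorphism $\varphi$ extends to a birational map $\hat V\dashrightarrow \hat V'$ defined over $\r$ inducing an isomorphism $(\hat V\setminus \hat B)(\r)\iso (\hat V'\setminus \hat B')(\r)$; this is precisely a birational map of pairs in the sense of Definition \ref{Defi:BirPairs}. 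Lemma \ref{lem:Invariance-kappa} then yields $\kappa_\r(\hat V,\hat B)=\kappa_\r(\hat V',\hat B')$, and Corollary \ref{RealKodIsIvariant} identifies these two numbers with $\kappa_\r(S)$ and $\kappa_\r(S')$ respectively.

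For part \hyperref[kapparsmallerkappa]{(2)}, I would fix any smooth SNC completion $(V,B)$ of $S$ defined over $\r$ with $B_\r$ having no imaginary loop (which exists by Lemma \ref{Rem:Imagloops}, and changing the completion does not affect the Kodaira dimensions by the flat base change argument already recalled in \S\ref{Sec:PairsLogKodaira} and by Corollary \ref{RealKodIsIvariant}). The divisor $D=B-B_\r$ is effective since $B_\r$ is a subdivisor of $B$ by construction. Therefore, for every $m\geq 1$, $m(K_V+B)-m(K_V+B_\r)=mD$ is effective, which gives a natural inclusion
\[
H^{0}(V,m(K_V+B_\r))\hookrightarrow H^{0}(V,m(K_V+B)).
\]
Taking Iitaka dimensions yields $\kappa_{\r}(S)=\kappa(V,K_V+B_\r)\leq \kappa(V,K_V+B)=\kappa(S)$. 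Under the extra hypothesis $B=B_\r$ (and the absence of imaginary loops), we have $K_V+B_\r=K_V+B$, so the inclusion above is an equality and the two Iitaka dimensions coincide.

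The only mildly delicate point is the extension of $\varphi$ to a birational map of pairs in part \hyperref[kapparInvariant]{(1)}: once both completions have been arranged to have no imaginary loops on the real boundary, this extension exists tautologically because the completions are projective and $\varphi$ is a birational map of real surfaces restricting to an $\r$-biregular map on the open parts. Everything else is a direct quotation of the previously established lemmas.
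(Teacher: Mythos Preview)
Your proof is correct and follows the same approach as the paper's own argument: part~(1) reduces to Lemma~\ref{lem:Invariance-kappa} (after eliminating imaginary loops via Lemma~\ref{Rem:Imagloops}), and part~(2) uses that $B-B_\r$ is effective to compare Iitaka dimensions. The paper's version is simply terser, citing Lemma~\ref{lem:Invariance-kappa} directly for~(1) and writing the inequality $\kappa(V,B_\r)\le\kappa(V,B)$ without spelling out the inclusion of global sections.
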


\begin{proof}
Assertion \hyperref[kapparInvariant]{(1)} follows from
Lemma~\ref{lem:Invariance-kappa}. For \hyperref[kapparsmallerkappa]{(2)}, it follows from Lemma ~\ref{Rem:Imagloops} that the surface $S$ always admits a smooth projective SNC completion $(V,B)$ defined over $\r$ such that
$B_{\r}$ has no imaginary loop. We then have $B=B_\r +E$ for some effective divisor $E$, and thus get
$\kappa_\r(S)=\kappa_\r(V,B)=\kappa(V,B_\r)\le\kappa(V,B)=\kappa(S)$,
with equality if $B=B_\r$. \qed
\end{proof}

\begin{example}{\rm
The inequality $\kappa_{\r}(S)\leq\kappa(S)$ is strict in general:
for instance, let $B\subseteq\mathbb{P}_{\r}^{2}$ be a general
arrangement consisting of $0\leq r\leq2$ real lines and a collection
of $p\geq0$ pairs of non-real complex conjugate lines. Then for
$S=\mathbb{P}_{\r}^{2}\setminus B$, we have $\kappa_{\r}(S)=-\infty$
independently of $r$ and $p$ while

\[
\kappa(S)=\begin{cases}
-\infty & \textrm{if }r+2p<3,\\
0 & \textrm{if }r+2p=3,\\
2 & \textrm{if }r+2p\geq4.
\end{cases}
\]
The equality $\kappa_{\r}(S)=-\infty$ follows from the fact that $S$
is birationally diffeomorphic to the complement $S'$ of $r\le 2$
lines in $\p^2_{\r}$, which satisfies
$\kappa_{\r}(S')=\kappa(S)=-\infty$. On the other hand, since $B$ is
an SNC divisor, $\kappa(S)=\kappa(\p^2_{\r},B)$ where
$K_{\p^2_{\r}}+B$ has degree $-3+r+2p$.}
\end{example}

As a consequence of
Proposition~\ref{Prop:KappaR-properties}\,\hyperref[kapparInvariant]{(1)},
we obtain:

\begin{corollary}
\label{cor:Kodaira-A2} Let $S$ be a smooth real surface. If $S$ is
birationally diffeomorphic to $\A_{\r}^{2}$, then
$\kappa_{\r}(S)=-\infty$.
\end{corollary}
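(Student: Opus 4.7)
The plan is to reduce the assertion to a one-line computation on the standard completion of $\A^2_\r$, using the invariance result already proved.

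First I would invoke Proposition~\ref{Prop:KappaR-properties}\,\hyperref[kapparInvariant]{(1)}: since $\kappa_\r$ is a birational-diffeomorphism invariant of smooth real surfaces, it suffices to check that $\kappa_\r(\A^2_\r)=-\infty$.

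Next I would produce a convenient smooth SNC completion of $\A^2_\r$ defined over $\r$. The obvious choice is $(V,B)=(\p^2_\r,L_\infty)$, where $L_\infty\simeq\p^1_\r$ is the line at infinity. Here $B$ is a single real irreducible component with infinite real locus, so $B=B_\r$, and $B_\r$ cannot carry an imaginary loop since such a loop requires at least two components of $B_\C$ defined over $\r$.

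Finally, Proposition~\ref{Prop:KappaR-properties}\,\hyperref[kapparsmallerkappa]{(2)} then gives $\kappa_\r(\A^2_\r)=\kappa(\A^2_\r)$, and the right-hand side is $-\infty$ by the classical computation: $K_{\p^2_\r}+L_\infty$ has degree $-2$, so $H^0(\p^2_\r,m(K_{\p^2_\r}+L_\infty))=0$ for all $m\ge 1$. There is no real obstacle here; the whole content of the corollary is already packaged in Proposition~\ref{Prop:KappaR-properties}, and the only thing to verify is that the standard completion of $\A^2_\r$ satisfies the hypotheses of its second item, which is immediate.
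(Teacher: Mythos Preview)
Your proposal is correct and follows essentially the same approach as the paper: invoke invariance of $\kappa_\r$ under birational diffeomorphism (Proposition~\ref{Prop:KappaR-properties}\,\hyperref[kapparInvariant]{(1)}), take the standard completion $(\p^2_\r,L)$ of $\A^2_\r$, and compute that $m(K_{\p^2_\r}+L)$ has negative degree. The only cosmetic difference is that you route the final computation through Proposition~\ref{Prop:KappaR-properties}\,\hyperref[kapparsmallerkappa]{(2)} to identify $\kappa_\r(\A^2_\r)$ with $\kappa(\A^2_\r)$, whereas the paper applies Definition~\ref{Defi:KappaR} directly (noting $B_\r=L$); the underlying calculation is identical.
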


\begin{proof}
Follows from
Proposition~\ref{Prop:KappaR-properties}\,\hyperref[kapparInvariant]{(1)},
and the fact that $\A^2_{\r}=\p^2_{\r}\setminus L$, where
$L\subseteq \p^2_{\r}$ is a real line. Hence,
$m(K_{\p^2_{\r}}+L)\simeq -2mL$ is not effective for each $m\ge 0$,
so
$\kappa_{\r}(S)=\kappa_{\r}(\A^2_\r)=\kappa(\p^2_{\r},L)=-\infty$.
\qed
\end{proof}

\subsection{Examples}

\subsubsection{ An algebraic model of real Kodaira dimension $0$: the exceptional fake
plane $Y(3,3,3)$} \label{Kod0}

 Let us recall from \cite[\S 5.1.1]{DM15} the following construction of a fake  plane
$S$ of Kodaira dimension $0$ whose complexification $S_{\C}$ is
$\mathbb{Q}$-acyclic\footnote{It is known that there is no fake real
plane of Kodaira dimension $0$ with $\mathbb{Z}$-acyclic
complexification \cite[Theorem 4.7.1(1), p.~244]{MiyBook}}, with
$H_{1}(S_{\C};\mathbb{Z})\simeq\mathbb{Z}_{9}$. Let $D$ be the union
of four general real lines $\ell_{i}\simeq\mathbb{P}_{\r}^{1}$,
$i=0,1,2,3$ in $\mathbb{P}^2_{\r}$ and let
$\tau:V\rightarrow\mathbb{P}^{2}_\r$ be the real projective surface
obtained by first blowing-up the real points
$p_{ij}=\ell_{i}\cap\ell_{j}$ with exceptional divisors $E_{ij}$,
$i,j=1,2,3$, $i\neq j$ and then blowing-up the real points
$\ell_{1}\cap E_{12}$, $\ell_{2}\cap E_{23}$ and $\ell_{3}\cap
E_{13}$ with respective exceptional divisors $E_{1}$, $E_{2}$ and
$E_{3}$. We let $B=\ell_{0}\cup\ell_{1}\cup\ell_{2}\cup\ell_{3}\cup
E_{12}\cup E_{23}\cup E_{13}$. The dual graphs of $D$, of its total
transform $\tau^{-1}(D)$ in $V$ and of $B$ are depicted in Figure
\hyperref[fig:logkod0-Y333]{1}.

\begin{figure}[!htbp]
\centerline{\input{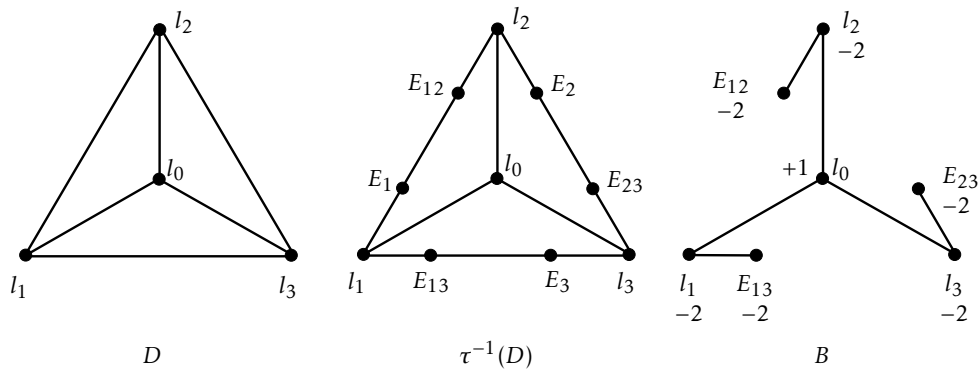}} \caption{Construction of
$Y(3,3,3)$}
\vspace{-6cm}\phantomsection\label{fig:logkod0-Y333}\vspace{6cm}
\end{figure}

By virtue of \cite[\S 5.1.1]{DM15} (see also \cite[\S
3.2]{DMSpitz}), the real surface $Y(3,3,3)=V\setminus B$ is a fake
real plane of Kodaira dimension $\kappa(Y(3,3,3))=0$. Since by
construction $B=B_{\r}$ is a tree, we conclude by
Proposition~\ref{Prop:KappaR-properties}\,\hyperref[kapparInvariant]{(1)}
that $\kappa_{\r}(Y(3,3,3))=\kappa(Y(3,3,3))=0$, hence that
$Y(3,3,3)$ is not birationally diffeomorphic to $\A_{\r}^{2}$. This
answers \cite[Question 5.2]{DM15}.

\subsubsection{An algebraic model of real Kodaira dimension $2$: the real Ramanujam surface}\label{Kod2Ram}

The real Ramanujam surface $S$ is a real model of the complex
Ramanujam surface \cite{Ram71} which is constructed as follows: let
$D\subseteq\mathbb{P}_{\r}^{2}=\mathrm{Proj}(\r[x,y,z])$ be the
union of the cuspidal cubic $C=\{x^{2}z+y^{3}=0\}$ with its
osculating conic $Q$ at an $\r$-rational point $q\in C(\r)$ distinct
from the singular point $[0:0:1]$ of $C$ and its flex $[0:0:1]$. Up
to change of coordinates, one can for instance choose $q=[1:1:-1]$,
which implies that the equation of $Q$ is
\[5x^2+24xy-40xz+45y^2-15yz-z^2.\]
So $Q$ is a smooth $\r$-rational conic intersecting $C$ at $q$ with
multiplicity $5$ and transversally at a second $\r$-rational point
$p$. We let $\beta:\mathbb{F}_{1}\rightarrow\mathbb{P}_{\r}^{2}$ be
the blow-up of $p$ with exceptional divisor
$E\simeq\mathbb{P}_{\r}^{1}$ and we let $S$ be the complement in
$\mathbb{F}_{1}$ of the proper transform $\tilde{D}$ of $D$. The
total transform $B$ of $\tilde{D}$ in a minimal log-resolution
$\tau:(V,B)\rightarrow(\mathbb{F}_{1},\tilde{D})$ of the pair
$(\mathbb{F}_{1},\tilde{D})$ is a tree of $\r$-rational curves
depicted in Figure \hyperref[fig:Ramanujam-reso]{2}.

\begin{figure}[ht]
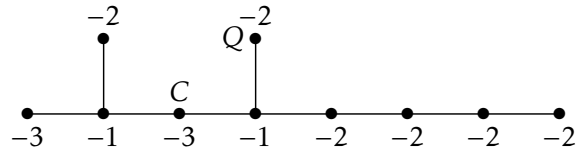

\xy (-85,5)*{}; (-40,0)*{};(30,0)*{}**\dir{-};
(-30,0)*{};(-30,10)*{}**\dir{-}; (-10,0)*{};(-10,10)*{}**\dir{-};
(-40,0)*{\bullet};  (-30,0)*{\bullet}; (-30,10)*{\bullet};
(-20,0)*{\bullet};
 (-10,0)*{\bullet}; (-10,10)*{\bullet};
(0,0)*{\bullet};  (10,0)*{\bullet};
 (20,0)*{\bullet};   (30,0)*{\bullet};
(-40,-3)*{-3}; (-30,-3)*{-1}; (-30,13)*{-2};
(-20,3)*{C};(-13,10)*{Q};  (-20,-3)*{-3}; (-10,-3)*{-1};
(-10,13)*{-2}; (0,-3)*{-2};
 (10,-3)*{-2};
 (20,-3)*{-2};
 (30,-3)*{-2};
\endxy
\caption{The weighted dual graph of the divisor $B\subseteq V$.}
\vspace{-3cm}\phantomsection\label{fig:Ramanujam-reso}\vspace{3cm}
\end{figure}

The surface $S$ is a fake real plane of Kodaira dimension $2$ with
contractible complexification $S_{\mathbb{C}}$: the contractibility
of $S_{\mathbb{C}}$ was first established by Ramanujam \cite{Ram71},
the fact that $\kappa(S)=2$ follows for instance from the
classification of contractible complex surfaces of Kodaira dimension
$\leq1$ established in \cite{GuMi87} (see also \cite{Ii78}), and the
fact that $S(\mathbb{R})\simeq \mathbb{R}^2$ was proven in
\cite[Example 3.8]{DM15}.

Since the smooth SNC completion $(V,B)$ of $S$ has the property that
$B_{\r}=B$ is a tree, the equality $\kappa_{\r}(S)=\kappa(S)=2$ holds by virtue
of
Proposition~\ref{Prop:KappaR-properties}\,\hyperref[kapparInvariant]{(1)},
and so, $S$ is not birationally diffeomorphic to $\A_{\r}^{2}$.

\begin{remark}{\rm
The same argument as above also applies to the three examples of
fake real planes $S$ of log-general type with contractible
complexification $S_{\C}$ constructed in \cite[\S 5.1]{DMSpitz} from
arrangements of real lines and irreducible singular $\r$-rational
quartics in $\mathbb{P}_{\r}^{2}$: all these surfaces have the
property to admit a smooth SNC completion $(V,B)$ defined over $\r$
for which $B_{\r}=B$ is a tree, so that their real Kodaira dimension
$\kappa_{\r}$ coincides with their usual Kodaira dimension. All of
them are therefore non birationally diffeomorphic to~$\A_{\r}^{2}$.}
\end{remark}

\section{Families of algebraic models of Kodaira dimension $1$ }\label{Kod1}

Fake real planes $S$ of Kodaira dimension $1$ whose
complexifications $S_{\mathbb{C}}$ are $\mathbb{Z}$-acyclic
manifolds, that is topological manifolds with trivial reduced
homology groups $\tilde{H}_i(S_{\mathbb{C}};\mathbb{Z})$, have been
classified up to isomorphism in \cite{DM15} (see also \cite{GuMi87}
and \cite{tDPe90} for the complex case). One obtains the following:

\begin{proposition}
\label{thm:Fake-Kodaira-1} A fake real plane of Kodaira dimension
$\kappa=1$ with $\mathbb{Z}$-acyclic complexification is not
birationally diffeomorphic to $\mathbb{A}_{\mathbb{R}}^{2}$.
\end{proposition}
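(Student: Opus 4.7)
The plan is to reduce the statement to an application of Corollary~\ref{cor:Kodaira-A2} combined with the invariance of $\kappa_\r$. Precisely, the goal is to show that any fake real plane $S$ of Kodaira dimension $1$ with $\mathbb{Z}$-acyclic complexification $S_\C$ satisfies $\kappa_\r(S)=1$. Once this is in hand, the conclusion is immediate: Corollary~\ref{cor:Kodaira-A2} gives $\kappa_\r(\A^2_\r)=-\infty$, and Proposition~\ref{Prop:KappaR-properties}\,\hyperref[kapparInvariant]{(1)} shows $\kappa_\r$ is a birational-diffeomorphism invariant, so no birational diffeomorphism between $S$ and $\A^2_\r$ can exist.

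To establish $\kappa_\r(S)=1$, I would invoke Proposition~\ref{Prop:KappaR-properties}\,\hyperref[kapparsmallerkappa]{(2)}, which guarantees the equality $\kappa_\r(S)=\kappa(S)$ as soon as $S$ admits \emph{some} smooth SNC completion $(V,B)$ defined over $\r$ in which $B=B_\r$ contains no imaginary loop. Since $\kappa(S)=1$ by assumption, it suffices to exhibit such a completion. The natural source is the classification of fake real planes of Kodaira dimension $1$ with $\mathbb{Z}$-acyclic complexification carried out in \cite{DM15}.

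The heart of the proof is therefore a case-by-case inspection of that classification. Each surface appearing there is realized as $S=V\setminus B$, where $V$ is built by an explicit sequence of blow-ups of \emph{real} points (possibly infinitely near) starting from an arrangement of real curves in $\p^2_\r$ or in a real Hirzebruch surface, and where $B$ is the resulting boundary tree. Going through the list, every irreducible component of $B$ is isomorphic to $\p^1_\r$ (so in particular defined over $\r$ with infinite real locus), and every node of $B$ is a real point. Consequently $B=B_\r$, the divisor $B_\r$ is a tree, and so it has no imaginary loop.

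The main obstacle is precisely this verification: one has to confirm that none of the completions used in \cite{DM15} acquires a non-real boundary component or a pair of conjugate non-real nodes. This is routine but not automatic, since in principle one could imagine classification members whose construction involves blow-ups at conjugate pairs of non-real points; the content of the check is that the classification avoids this. Once it is done, Proposition~\ref{Prop:KappaR-properties}\,\hyperref[kapparsmallerkappa]{(2)} yields $\kappa_\r(S)=\kappa(S)=1$, and the comparison with $\kappa_\r(\A^2_\r)=-\infty$ finishes the argument.
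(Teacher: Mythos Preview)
Your proposal is correct and follows essentially the same route as the paper: invoke the classification in \cite{DM15} to obtain a smooth SNC completion $(V,B)$ with $B=B_\r$ a tree of $\mathbb{P}^1_\r$'s, deduce $\kappa_\r(S)=\kappa(S)=1$ via Proposition~\ref{Prop:KappaR-properties}\,\hyperref[kapparsmallerkappa]{(2)}, and conclude by Corollary~\ref{cor:Kodaira-A2}. The only refinement is that the paper cites \cite[Theorem~3.2]{DM15} directly, which already packages the statement that $V$ is obtained from $\mathbb{P}^2_\r$ by blowing up sequences of \emph{real} points with boundary a tree of $\mathbb{P}^1_\r$'s, so the ``case-by-case inspection'' you anticipate is absorbed into that single reference.
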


\begin{proof}
 By virtue of \cite[Theorem 3.2]{DM15}, every such
surface admits a completion into a smooth projective surface $V$
defined over $\mathbb{R}$ obtained from $\mathbb{P}^{2}_{\r}$ by
blowing-up specific sequences of real points, and whose boundary
$B=V\setminus S$ consists of a tree of
$\mathbb{P}_{\mathbb{R}}^{1}$'s. In particular, such a smooth pair
$(V,B)$ satisfies $B_{\mathbb{R}}=B$, and we deduce from Proposition
\ref{Prop:KappaR-properties}\,\hyperref[kapparInvariant]{(1)} that
$\kappa_{\mathbb{R}}(S)=\kappa(S)=1$. The result then follows from
Corollary~\ref{cor:Kodaira-A2}. \qed
\end{proof}

In the rest of this section, we build on a blow-up construction of
certain fake real planes of Kodaira dimension $1$ with contractible
complexifications \cite[Example 3.5]{DM15} to derive the existence
of infinitely many pairwise non birationally diffeomorphic such
surfaces. The main ingredient is the uniqueness of the log-canonical
fibration, given by Lemma~\ref{lem:Invariance-kappa}.

\vspace{1em}

Let $1<a<b$ be a pair of coprime integers and consider the rational
pencil
\[
\Psi:\mathbb{P}_{\mathbb{R}}^{2}=\mathrm{Proj}_{\mathbb{R}}(\mathbb{R}[x,y,z])\dashrightarrow\mathbb{P}_{\mathbb{R}}^{1},\quad[x:y:z]\mapsto[y^{b}:x^{a}z^{b-a}].
\]
It has two proper base points $q_{0}=[0:0:1]$ and
$q_{\infty}=[1:0:0]$. A general geometrically irreducible fiber of
$\Psi$ is an $\mathbb{R}$-rational cuspidal curve, with multiplicity
$a$ and $b-a$ at $q_0$ and $q_\infty$ respectively, and $\Psi$ has
precisely two degenerate members: $\Psi^{-1}([1:0])$ which is
supported on the union of the lines $L_{x}=\{x=0\}$ and
$L_{z}=\{z=0\}$ and $\Psi^{-1}([0:1])$ which is supported on the
line $L_{y}=\{y=0\}$. Up to exchanging the roles of $x$ and $z$, we
assume from now on that $a>b-a$.

Let $C_{a,b}=\Psi^{-1}([1:-1])=\{x^{a}z^{b-a}-y^{b}=0\}$, let
$p=[1:1:1]\in C_{a,b}$ and let
$\beta:X(a,b)\rightarrow\mathbb{P}_{\mathbb{R}}^{2}$ be the blow-up
of $p$, with exceptional divisor $E$. We let
$S(a,b)=X(a,b)\setminus(C_{a,b}\cup L_{z})$ where we identified a
curve in $\mathbb{P}_{\mathbb{R}}^{2}$ with its proper transform in
$X(a,b)$.

The dual graph of the total transform of $C_{a,b}\cup L_x \cup L_y \cup L_z$ in the
minimal resolution $\alpha:V(a,b)\rightarrow X(a,b)$ of the induced
rational map
$\Psi\circ\beta:X(a,b)\dasharrow\mathbb{P}_{\mathbb{R}}^{1}$ is
depicted in Figure \hyperref[fig:logkod1EX]{3}. The boundary
$B(a,b)=V(a,b)\setminus S(a,b)$ is the reduced total transform of
$C_{a,b}\cup L_{z}$. The induced morphism
$f=\Psi\circ\beta\circ\alpha:V(a,b)\rightarrow\mathbb{P}_{\mathbb{R}}^{1}$
is a $\mathbb{P}^{1}$-fibration having the last exceptional divisors
$C_{0}$ and $C_{1}$ of $\alpha$ over the points $q_{0}$ and
$q_{\infty}$ as disjoint sections.

\begin{figure}[ht]
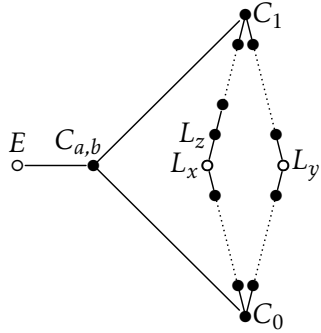

\xy (-95,5)*{}; (-40,0)*{\circ};(-30,0)*{\bullet}**\dir{-};
(-30,0)*{};(-10,20)*{\bullet}**\dir{-};
(-30,0)*{};(-10,-20)*{\bullet}**\dir{-};
(-10,20)*{};(-11,16)*{\bullet}**\dir{-};(-11,16)*{};(-13,8)*{}**\dir{..};(-13,8)*{\bullet};(-15,0)*{\circ}**\dir{-};
(-15,0)*{\circ};(-14,-4)*{\bullet}**\dir{-};(-14,-4)*{};(-11,-16)*{\bullet}**\dir{..};(-11,-16)*{};(-10,-20)*{}**\dir{-};
(-10,20)*{};(-9,16)*{\bullet}**\dir{-};(-9,16)*{};(-6,4)*{\bullet}**\dir{..};(-6,4)*{};(-5,0)*{\circ}**\dir{-};
(-5,0)*{\circ};(-6,-4)*{\bullet}**\dir{-};(-6,-4)*{};(-9,-16)*{\bullet}**\dir{..};(-9,-16)*{};(-10,-20)*{}**\dir{-};
(-14,4)*{\bullet}; (-40,3)*{E};(-32,3)*{C_{a,b}};
(-18,0)*{L_x};(-17,4)*{L_z};
(-2,0)*{L_y};(-7,-20)*{C_{0}};(-7,20)*{C_{1}};
\endxy
\caption{The dual graph of the total transform of $C_{a,b}\cup L_x
\cup L_y \cup L_z$, the components denoted by $\circ$ are those
which do not belong to the boundary $B(a,b)$. }
\vspace{-5.8cm}\phantomsection\label{fig:logkod1EX}\vspace{5.8cm}
\end{figure}

\begin{proposition} \label{prop:fakeCont-Kod1} \label{thm:fakeCont-kod1-diffbir-descent}
For every pair of coprime integers $1<a<b$, the surface $S(a,b)$ is
a fake real plane of Kodaira dimension
$\kappa=\kappa_{\mathbb{R}}=1$, with contractible complexification.
Furthermore, if $(a,b)\neq (a',b')$ then $S(a,b)$ and $S(a',b')$ are
not birationally diffeomorphic.
\end{proposition}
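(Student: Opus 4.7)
The first assertions — that $S(a,b)$ is a fake real plane of Kodaira dimension $1$ with contractible complexification $S(a,b)_\C$ — are established in \cite[Example~3.5]{DM15}. Since the boundary $B(a,b)$ of the smooth SNC completion $(V(a,b),B(a,b))$ is by construction a tree of curves each isomorphic to $\p^1_\r$, we have $B(a,b)=B(a,b)_\r$ and in particular no imaginary loops. Proposition~\ref{Prop:KappaR-properties}\,\hyperref[kapparsmallerkappa]{(2)} then yields $\kappa_\r(S(a,b))=\kappa(S(a,b))=1$.

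For the distinction statement, assume $\varphi\colon S(a,b)\dashrightarrow S(a',b')$ is a birational diffeomorphism. By Lemma~\ref{Lem:Decomposition}, $\varphi$ extends to a birational map of pairs $(V(a,b),B(a,b))\dashrightarrow(V(a',b'),B(a',b'))$, and by Lemma~\ref{lem:Invariance-kappa} the pushforward $\varphi_*$ gives a graded isomorphism $\bigoplus_{m\geq 0} H^0(V(a,b),m(K_{V(a,b)}+B(a,b)_\r))\iso \bigoplus_{m\geq 0} H^0(V(a',b'),m(K_{V(a',b')}+B(a',b')_\r))$ of log-canonical rings. Consequently, the log-canonical Iitaka fibrations on the two sides are intertwined by $\varphi$ up to birational equivalence of their bases.

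The crucial step is to identify this log-canonical fibration of $S(a,b)$ with the restriction to $S(a,b)$ of the $\p^1$-fibration $f\colon V(a,b)\to\p^1_\r$. A direct calculation from the dual graph in Figure~\hyperref[fig:logkod1EX]{3} shows that some positive multiple of $K_{V(a,b)}+B(a,b)_\r$ is effective and supported on fibers of $f$. Hence for $m$ sufficiently large and divisible, $|m(K_{V(a,b)}+B(a,b)_\r)|$ is composed with $f$ and defines its Iitaka fibration. Restricting to $S(a,b)$, one obtains a morphism $\bar f\colon S(a,b)\to\A^1_\r$ with exactly two multiple fibers, coming from the proper transforms of $L_x$ and $L_z$, of multiplicities $a$ and $b-a$ equal to the multiplicities of these lines in the original pencil $\Psi$. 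These multiplicities are therefore invariants of $S(a,b)$ up to birational diffeomorphism, and since $1<a<b$ are coprime with $a>b-a$, the unordered pair $\{a,b-a\}$ determines $(a,b)$ uniquely.

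The hardest step will be the identification of the log-canonical fibration with $f|_{S(a,b)}$: one must express $K_{V(a,b)}+B(a,b)_\r$ as a non-negative rational combination of irreducible divisors supported on fibers of $f$, by reading coefficients off the resolution tree, and verify that the corresponding Iitaka fibration does coincide with $f$. Once this is in hand, a routine check ensures that the induced identification of log-canonical rings forces $\varphi$ to send multiple fibers to multiple fibers with preserved multiplicities.
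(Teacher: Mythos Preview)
Your overall strategy is the same as the paper's: invoke Lemma~\ref{lem:Invariance-kappa} to get an isomorphism of log-canonical rings, identify the log-canonical fibration with the $\p^1$-fibration $f$ coming from the pencil $\Psi$, and then read off numerical invariants from the special fibers. The paper carries this out on the projective model $(V(a,b),B(a,b))$, using that the positive part of the Zariski decomposition of $K_V+B$ is $(1-\tfrac{1}{a}-\tfrac{1}{b})\ell$ for a general fiber $\ell$ (via \cite[Lemma~4.5.3]{MiyBook}), and then observes that the curves $E$, $L_x$, $L_y$ lying in $S(a,b)$ occur in their respective fibers of $f$ with multiplicities $1$, $a$, $b$; since $1<a<b$ and $1<a'<b'$, the induced automorphism $\gamma$ of $\p^1_\r$ must fix each of the three special base points, forcing $a=a'$ and $b=b'$.

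Your execution, however, contains a concrete error in the identification of the multiple fibers. You claim that $\bar f\colon S(a,b)\to\A^1_\r$ has two multiple fibers supported on the proper transforms of $L_x$ and $L_z$, with multiplicities $a$ and $b-a$. But $L_z$ is a component of the \emph{boundary} $B(a,b)$ (recall $S(a,b)=X(a,b)\setminus(C_{a,b}\cup L_z)$), so it does not contribute a fiber of $f|_{S(a,b)}$ at all. The components of the three degenerate fibers of $f$ that actually lie in $S(a,b)$ are $E$, $L_x$ and $L_y$, with multiplicities $1$, $a$ and $b$ respectively; the two genuinely multiple fibers of $f|_{S(a,b)}$ therefore have multiplicities $a$ and $b$, not $a$ and $b-a$. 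Relatedly, the base of $f|_{S(a,b)}$ is $\p^1_\r$, not $\A^1_\r$: no entire fiber of $f$ is contained in $B(a,b)$, since each of the three degenerate fibers retains a component ($E$, $L_x$, or $L_y$) in $S(a,b)$. Once you replace $\{a,b-a\}$ by $\{a,b\}$ and $\A^1_\r$ by $\p^1_\r$, your argument goes through and is essentially the paper's; but as written it rests on a misreading of which curves belong to the boundary.
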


\begin{proof} The first assertion follows from \cite[Theorem 3.2]{DM15} and Proposition~\ref{Prop:KappaR-properties}\,\hyperref[kapparInvariant]{(1)} using the fact that $B_{\mathbb{R}}(a,b)=B(a,b)$ is a tree. Set $S=S(a,b)$ and $S'=S(a',b')$ and  suppose that there exists a birational diffeomorphism $\varphi:S\dashrightarrow S'$. Let $(V,B)=(V(a,b),B(a,b))$ and $(V',B')=(V(a',b'),B(a',b'))$ be the smooth pairs obtained by taking the minimal resolutions of the pencils $\Psi\circ\beta$ and $\Psi'\circ\beta'$ respectively.
The structures of $B$ and $B'$ imply that $\varphi$ extends to a
birational diffeomorphism of pairs $\Phi:(V,B)\dashrightarrow
(V',B')$. Indeed, otherwise either $\Phi$ or its inverse, say
$\Phi$, would contract an irreducible component of $B$ onto a real
point of $B'$. But $B$ does not contain any irreducible curve whose
proper transform by a birational morphism $W\rightarrow V$ defined
over $\mathbb{R}$ whose center is supported on $B$ is $(-1)$-curve
which can be contracted while keeping the property that the total
transform of $B$ is an SNC divisor.

By virtue of \cite[Lemma 4.5.3 p. 237]{MiyBook}, the positive part
of the Zariski decomposition of $K_V+B$ is equal to
$(1-\frac{1}{a}-\frac{1}{b})\ell$ where $\ell$ denotes a general
real fiber of the $\mathbb{P}^{1}$-fibration
$f:V\rightarrow\mathbb{P}_{\mathbb{R}}^{1}$. Since $1<a<b$, it
follows that $f$ coincides with the log-canonical fibration
$f_{|m(K_{V}+B)|}:V\rightarrow\mathbb{P}_{\mathbb{R}}^{1}$ for every
integer $m\geq 1$. The same holds for the log-canonical fibration
$f'=f_{|m(K_{V'}+B')|}:V'\rightarrow\mathbb{P}_{\mathbb{R}}^{1}$ on
$V'$. Since $B=B_{\mathbb{\mathbb{R}}}$ and similarly for $B'$, it
follows from Lemma \ref{lem:Invariance-kappa} that for every $m\geq
1$, $\Phi$ induces an isomorphism between $H^{0}(V,m(K_{V}+B))$ and
$H^{0}(V',m(K_{V'}+B'))$. Consequently, there exists an automorphism
$\gamma$ of $\mathbb{P}_{\mathbb{R}}^{1}$ defined over $\mathbb{R}$
such that $f'\circ\Phi=\gamma\circ f$.

The curves $E$, $L_{x}$ and $L_{y}$ have multiplicities $1$, $a$ and
$b$ as irreducible components of the scheme theoretic fibers of
$f:V\rightarrow \mathbb{P}^1_{\mathbb{R}}$ over the points $[1:1]$,
$[1:0]$ and $[0:1]$ respectively. Similarly, the curves $E'$,
$L'_{x}$ and $L'_{y}$ have multiplicities $1$, $a'$ and $b'$ as
irreducible components of the scheme theoretic fibers of
$f':V'\rightarrow \mathbb{P}^1_{\mathbb{R}}$ over these points.
Since $1<a<b$ and $1<a'<b'$, and $\Phi(S(\mathbb{R}))\subseteq
S'(\mathbb{R})$, it follows that $\Phi_{*}(E)=E'$,
$\Phi_{*}(L_{x})=L'_{x}$ and $\Phi_{*}(L_{y})=L'_{y}$. Thus
$\gamma=\mathrm{id}$, from which we conclude in turn that $a=a'$ and
$b=b'$. \qed
\end{proof}

\section{Families of algebraic models of Kodaira dimensions  $2$ }\label{Kod2}

Here we construct infinite families of pairwise non birationally
diffeomorphic fake real planes of real Kodaira dimension $2$ with
contractible complexifications. We also give examples of
$\mathbb{Z}$-acyclic complex surfaces of log-general type with two
real forms: one of them has negative logarithmic Kodaira real
dimension and is in fact birationally diffeomorphic to $\A_{\r}^{2}$
whereas the other one has real Kodaira dimension $2$, hence is not
birationally diffeomorphic to $\A_{\r}^{2}$.

\subsection{A criterion for isomorphism}
\begin{lemma} \label{lem:canonical-morph-iso} Let $S$ be a fake real plane of real
Kodaira dimension $2$, with $\mathbb{Q}$-acyclic complexification.
Suppose that there exists a smooth SNC completion $(V,B)$ of $S$
defined over $\mathbb{R}$ for which $B=B_{\mathbb{R}}$. Then the
log-canonical rational map
\[
\varphi:V\dashrightarrow\mathrm{Proj}(\bigoplus_{m\geq0}H^{0}(V,m(K_{V}+B_{\mathbb{R}})))
\]
is a morphism, which restricts to an isomorphism between $S$ and its
image.
\end{lemma}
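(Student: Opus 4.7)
My plan is to analyse $\varphi$ through the Zariski decomposition of $K_V+B$ combined with the log-abundance theorem on surfaces. Write $K_V+B=P+N$, where $P$ is a nef $\mathbb{Q}$-divisor and $N$ is an effective $\mathbb{Q}$-divisor with negative-definite intersection matrix on its support and $P\cdot C=0$ for every irreducible component $C$ of $\mathrm{Supp}(N)$. The hypothesis $\kappa_{\r}(S)=\kappa(V,K_V+B)=2$ forces $P^2>0$, so $P$ is big and nef. For every sufficiently divisible $m$, $mN$ is integral and every section of $m(K_V+B)=mP+mN$ vanishes along $mN$, hence the natural map $H^0(V,mP)\to H^0(V,m(K_V+B))$ is an isomorphism. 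Consequently, as a rational map, $\varphi$ coincides with the one defined by $|mP|$ for such $m$.

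The key structural step is to prove the inclusion $\mathrm{Supp}(N)\subseteq B$, so that the negative part of the Zariski decomposition lies entirely inside the boundary. Both hypotheses of the lemma enter here: the $\mathbb{Q}$-acyclicity of $S_{\C}$ implies that every irreducible curve on $V$ is $\mathbb{Q}$-linearly equivalent to a $\mathbb{Q}$-combination of components of $B$, while the assumption $B=B_{\r}$ together with $\kappa_{\r}(S)=2$ guarantees that the log-canonical data is recorded faithfully by $B$ itself, with no contribution from isolated non-real points. Putting these together with standard arguments for smooth $\mathbb{Q}$-homology planes of log-general type, as found for instance in Miyanishi's monograph on open algebraic surfaces, any $(K_V+B)$-negative curve must be a boundary component, giving $\mathrm{Supp}(N)\subseteq B$. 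I expect this inclusion to be the main technical obstacle: without the $\mathbb{Q}$-acyclicity hypothesis on $S_{\C}$ or the equality $B=B_{\r}$, components of $N$ could lie in $S$ itself, and $\varphi$ would then contract curves on $S$ and fail to be an embedding.

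Granted $\mathrm{Supp}(N)\subseteq B$, I apply the log-abundance theorem for surfaces (Fujita, Kawamata) to conclude that $P$ is semi-ample. Then for $m$ sufficiently divisible, $|mP|$ is base-point free and defines a birational morphism $\varphi_P\colon V\to\overline{V}$ onto a normal projective surface. By Hodge index and Zariski's main theorem, the exceptional locus of $\varphi_P$ is exactly the union of irreducible curves $C$ with $P\cdot C=0$, namely the irreducible components of $\mathrm{Supp}(N)$, and $\varphi_P$ restricts to an open immersion on the complement of this exceptional locus. Combining with $\varphi=\varphi_P$ from the first paragraph, $\varphi$ is a morphism whose exceptional locus lies in $\mathrm{Supp}(N)\subseteq B$. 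Therefore $\varphi$ restricts to an isomorphism $S=V\setminus B\iso\varphi(S)$ onto its image, as claimed.
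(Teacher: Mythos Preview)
Your approach via the Zariski decomposition $K_V+B=P+N$ and log-abundance is essentially the same as the paper's, which runs a $(K_V+B)$-MMP to reach a model $(W,\Delta)$ with $K_W+\Delta$ nef and semi-ample; the pull-back of $K_W+\Delta$ is precisely your positive part $P$. However, there is a genuine gap in your third paragraph. You assert that the exceptional locus of $\varphi_P$ consists of ``the irreducible curves $C$ with $P\cdot C=0$, namely the irreducible components of $\mathrm{Supp}(N)$''. The first clause is correct, but the identification with $\mathrm{Supp}(N)$ is not: every component of $N$ is $P$-trivial, but the converse fails in general. A curve $C$ not in $\mathrm{Supp}(N)$ can satisfy $P\cdot C=0$; it then has $(K_V+B)\cdot C=N\cdot C\ge 0$, so it is not detected by your $(K_V+B)$-negativity argument, yet it is still contracted by $\varphi_P$. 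Your inclusion $\mathrm{Supp}(N)\subseteq B$ therefore does not suffice to conclude that the exceptional locus of $\varphi$ lies in $B$.

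What is missing is exactly the second half of the paper's argument: one must show that no irreducible curve $C\subseteq S$ satisfies $P\cdot C=0$. The paper does this by first invoking the Bogomolov--Miyaoka--Yau inequality to deduce that the $\mathbb{Q}$-acyclic surface $S_{\C}$ of log-general type contains no topologically contractible algebraic curve, and then appealing to \cite[Lemma~1.6.1, p.~200]{MiyBook}, which says that on the almost minimal model any interior curve contracted by the log-canonical morphism would have to be of this contractible type. In your language, this shows that every $P$-trivial curve lies in $B$, not merely the components of $N$. Once you add this step, your argument goes through and matches the paper's.
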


\begin{proof}
By hypothesis, $S_{\mathbb{C}}$ is a smooth $\mathbb{Q}$-acyclic
surface of Kodaira dimension
$\kappa(S_{\mathbb{C}})=\kappa(S)=\kappa_{\mathbb{R}}(S)=2$. By the
Bogomolov-Miyaoka-Yau inequality (see e.g. \cite[Theorem
6.6.2]{MiyBook}) $S_{\mathbb{C}}$ does not contain any topologically
contractible algebraic curve. Since $S_{\mathbb{C}}$ is affine and
rational, it follows from \cite[Lemma 1.5.1 p. 198]{MiyBook} that
the only curves contracted by a
$K_{V_{\mathbb{C}}}+B_{\mathbb{C}}$-MMP ran from
$(V_{\mathbb{C}},B_{\mathbb{C}})$ are irreducible components of
$B_{\mathbb{C}}$. The assumption that $B=B_{\mathbb{R}}$ implies
that such a MMP is defined over $\mathbb{R}$. Let
$h:(V,B)\rightarrow(W,\Delta)$ be the corresponding birational
morphism, where $\Delta=h_{*}B$. Then $(W,\Delta)$ is an lc pair
defined over $\mathbb{R}$, such that $K_{W}+\Delta$ is semi-ample
\cite[Theorem 4.12.1]{MiyBook}, and $h$ restricts to an isomorphism
$S=V\setminus B\stackrel{\simeq}{\rightarrow}W\setminus\Delta$. We
have $K_{V}+B=h^{*}(K_{W}+\Delta)+E$ where $E$ is an effective
$\mathbb{Q}$-divisor supported on the exceptional locus of $h$, and
$h$ induces an isomorphism
$h^{*}:H^{0}(W,m(K_{W}+\Delta))\stackrel{\sim}{\rightarrow}H^{0}(V,m(K_{V}+B))=H^{0}(V,m(K_{V}+B_{\mathbb{R}}))$
for every $m\geq0$. Finally, again due to the fact that
$S_{\mathbb{C}}$ is affine, rational and does not contain any
topologically contractible algebraic curve, it follows from
\cite[Lemma 1.6.1 p. 200]{MiyBook} that the only curves that could
be contracted by the log-canonical morphism
\[
\psi:W\rightarrow\mathrm{Proj}(\bigoplus_{m\geq0}H^{0}(W,m(K_{W}+\Delta)))\simeq\mathrm{Proj}(\bigoplus_{m\geq0}H^{0}(V,m(K_{V}+B_{\mathbb{R}})))
\]
are irreducible components of $\Delta$. So $\varphi=\psi\circ h$
restricts to an isomorphism between $S$ and its image. \qed
\end{proof}

By combining Lemma \ref{lem:Invariance-kappa} and Lemma
\ref{lem:canonical-morph-iso}, we obtain the following:

\begin{proposition}
\label{prop:real-log-gen-diffbir-iso}Let $S$ and $S'$ be fake real
planes of real Kodaira dimension $2$ with $\mathbb{Q}$-acyclic
complexifications. Assume further that there exist SNC minimal
completions $(V,B)$ and $(V',B')$ of $S$ and $S'$ respectively
defined over $\mathbb{R}$ such that $B=B_{\mathbb{R}}$ and
$B'=B'_{\mathbb{R}}$. Then every birational diffeomorphism
$f:S\dashrightarrow S'$ is an isomorphism.
\end{proposition}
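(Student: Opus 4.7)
The plan is to reconstruct $f$ explicitly as a composition of isomorphisms, using the two log-canonical morphisms provided by Lemma~\ref{lem:canonical-morph-iso} together with the isomorphism of log-canonical rings induced by $f$ via Lemma~\ref{lem:Invariance-kappa}. First, since $V$ and $V'$ are smooth projective completions defined over $\mathbb{R}$, the birational diffeomorphism $f$ extends uniquely to a birational map $F\colon V\dashrightarrow V'$ defined over $\mathbb{R}$; by construction $F$ induces an isomorphism $(V\setminus B)(\mathbb{R})\iso (V'\setminus B')(\mathbb{R})$, hence is a birational map of pairs in the sense of Definition~\ref{Defi:BirPairs}. The assumptions $B=B_{\mathbb{R}}$ and $B'=B'_{\mathbb{R}}$ imply in particular that neither boundary contains an imaginary loop, so Lemma~\ref{lem:Invariance-kappa} applies. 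Taking $(m,n)=(m,m)$ for each $m\ge 0$ produces an isomorphism of graded $\mathbb{R}$-algebras
\[
F_*\colon R:=\bigoplus_{m\ge 0}H^0\bigl(V, m(K_V+B_{\mathbb{R}})\bigr)\iso R':=\bigoplus_{m\ge 0}H^0\bigl(V', m(K_{V'}+B'_{\mathbb{R}})\bigr),
\]
and thereby an isomorphism $\sigma\colon Y':=\mathrm{Proj}(R')\iso Y:=\mathrm{Proj}(R)$.

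Next, by Lemma~\ref{lem:canonical-morph-iso} the log-canonical rational maps $\varphi\colon V\to Y$ and $\varphi'\colon V'\to Y'$ are everywhere defined morphisms that restrict to open immersions $\varphi|_S\colon S\iso \varphi(S)$ and $\varphi'|_{S'}\colon S'\iso \varphi'(S')$. Because $F_*$ is induced geometrically by pullback of pluri-log-canonical sections through $F$, the equality of rational maps $\sigma^{-1}\circ\varphi=\varphi'\circ F$ holds on the domain of $F$, and $\sigma$ restricts to an isomorphism $\varphi'(S')\iso \varphi(S)$. Composing the three isomorphisms yields
\[
f\;=\;(\varphi'|_{S'})^{-1}\circ\sigma^{-1}\circ\varphi|_S\colon S\iso S',
\]
which exhibits $f$ as an isomorphism of real algebraic surfaces, concluding the proof.

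The main technical point is the compatibility $\sigma^{-1}\circ\varphi=\varphi'\circ F$ invoked above: one must verify that the abstractly defined ring isomorphism $F_*$ of Lemma~\ref{lem:Invariance-kappa} truly coincides with the pullback of rational pluri-log-canonical sections along $F$, and deduce from this that $\sigma$ carries the open subset $\varphi'(S')$ of $Y'$ onto $\varphi(S)\subseteq Y$. Both facts should follow by unwinding the construction of $F_*$ in the proof of Lemma~\ref{lem:Invariance-kappa}, which is built degree by degree from exactly such pullbacks, but this is the place where the argument needs to be made explicit.
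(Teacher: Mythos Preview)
Your approach is essentially identical to the paper's: extend $f$ to a birational map of pairs $F$, use Lemma~\ref{lem:Invariance-kappa} to obtain an isomorphism of log-canonical rings, use Lemma~\ref{lem:canonical-morph-iso} to identify $S$ and $S'$ with open subschemes of the respective $\mathrm{Proj}$'s, and read off that $f$ is an isomorphism from the resulting commutative square. The paper records exactly this diagram and leaves the compatibility you flag at the same level of implicitness.

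There is one genuine slip in your justification, however. You write that ``the assumptions $B=B_{\mathbb{R}}$ and $B'=B'_{\mathbb{R}}$ imply in particular that neither boundary contains an imaginary loop.'' This is false as stated: Example~\ref{Exa:DifferentKappaR} (a real line and a real conic in $\mathbb{P}^2_{\mathbb{R}}$ meeting at two conjugate non-real points) satisfies $B=B_{\mathbb{R}}$ yet has an imaginary loop. The paper supplies the missing ingredient: since $S_{\mathbb{C}}$ is $\mathbb{Q}$-acyclic, \cite[Lemma~2.3]{DM15} forces $B_{\mathbb{C}}$ to be a \emph{tree} of rational curves, so any two irreducible components meet in at most one point and no imaginary loop can occur. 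Once you insert this step, your argument matches the paper's.
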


\begin{proof}
Let $F:(V,B)\dashrightarrow(V',B')$ be the birational map of pairs
induced by $f$. The hypothesis implies that the boundaries
$B_{\mathbb{R}}=B$ and  $B'_{\mathbb{R}}=B'$ are trees of
$\mathbb{R}$-rational curves \cite[Lemma 2.3]{DM15}. So by
\ref{lem:Invariance-kappa}, $\varphi$ induces an isomorphism
$\theta$ between the log-canonical rings
\[
R(V,B_{\mathbb{R}})=\bigoplus_{m\geq0}H^{0}(V,m(K_{V}+B_{\mathbb{R}}))\quad\textrm{and\quad}R(V',B_{\mathbb{R}}')=\bigoplus_{m\geq0}H^{0}(V',m(K_{V'}+B_{\mathbb{R}}'))
\]
of the pairs $(V,B_{\mathbb{R}})$ and $(V',B_{\mathbb{R}}')$
respectively. On the other hand, it follows from  Lemma
\ref{lem:canonical-morph-iso} that the log-canonical morphisms
$\varphi:V\rightarrow X=\mathrm{Proj}(R(V,B_{\mathbb{R}}))$ and
$\varphi':V'\rightarrow X'=\mathrm{Proj}(R(V',B_{\mathbb{R}}'))$
restrict to isomorphisms $S=V\setminus B_{\mathbb{R}}\simeq
X\setminus\varphi_{*}(B_{\mathbb{R}})$ and $S'=V'\setminus
B_{\mathbb{R}}'\simeq X'\setminus\varphi'_{*}(B_{\mathbb{R}}')$. We
thus  get a commutative diagram
\[\xymatrix{ S \ar[r] \ar[d]_{f} & V \ar@{-->}[d]_{F} \ar[r]^{\varphi} & X \ar[d]^{\wr} \\ S'  \ar[r] & V' \ar[r]^{\varphi'} & X'}\]
where the right-hand side isomorphism is induced by $\theta$. This
shows that $f$ is an isomorphism. \qed
\end{proof}

\subsection{Miyanishi-Sugie surfaces: a countable family of pairwise non birationally diffeometric fake
real planes of log-general type}

We consider the following real counterpart of a family of smooth
complex topologically contractible surfaces of log-general type
constructed by Miyanishi-Sugie \cite{MS90}. For each integer $s\ge
1$, we will construct a surface $S_s$, defined over $\r$, which
corresponds to the surface $X_{s+1,1}^{(1)}$ of \cite{MS90}, i.e.~to
the construction of \cite{MS90} with $n=s+1$, $m=1$ and $r=s$. We
recall the construction here for self-containedness.

We define $C_s$ and $L_s$ to be the irreducible curves in
$\mathbb{P}_{\mathbb{R}}^{2}=\mathrm{Proj}(\mathbb{R}[x,y,z])$ given
by the zero loci of the polynomials
\[
y^s((s^2-1)x+s y-z)+(x-s y)(x+y)^s\text{ and }((s^2-1)x+s y-z)\]
respectively. Note that the polynomials above correspond to $
-y^{n-1}z+x^{2}(x^{n-2}-\sum_{i=2}^{n-2}(i-1)\binom{n}{i}x^{n-i-2}y^{i})$
and $(n^2-2n)x+(n-1)y-z$, with $n=s+1$, and thus the equations of
$C_s$ and $L_s$ are the same as those given in \cite[Lines 1-2, Page
338]{MS90}.

The curve $C_{s}$ is rational, of degree $s+1$ with a unique
cuspidal singularity of multiplicity $s$ at the point $[0:0:1]$,
which is solved by one blow-up. The line $L_s$ is the tangent line
to $C_{s}$ at the point $p=[1:-1:s^2-s-1]$ and intersects $C_{s}$
with multiplicity $s$ at $p$ and with multiplicity $1$ at the point
$q=[s:1:s^3]$.

Let $\tau:V_{0,s}\rightarrow\mathbb{P}_{\mathbb{R}}^{2}$ be the
birational morphism defined over $\mathbb{R}$ obtained by first
blowing-up $q$ and then blowing-up $s+1$ times the intersection
point of the proper transform of $C_{s}$ with that of the previous
exceptional divisor produced. Let
$E_{1},\ldots,E_{s+2}\simeq\mathbb{P}_{\mathbb{R}}^{1}$ be the
corresponding successive exceptional divisors. Let
$B_{0,s}=C_{s}\cup L_{s}\cup\bigcup_{i=1}^{s+1}E_{i}$, where we
identified each curve with its proper transform in $V_{0,s}$ and let
$S_{s}=V_{0,s}\setminus B_{0,s}$. The dual graph of $B_{0,s}$ is
given in the following figure, where the double arrow corresponds to
a multiple intersection at a point (corresponding here to the point
$p$).

\begin{center}
\xy (-60,5)*{};
 (-10,0)*{};(10,0)*{}**\dir2{-};
 (10,0)*{};(25,0)*{}**\dir{-};
(25,0)*{};(35,0)*{}**\dir{..};(35,0)*{};(40,0)*{\bullet}**\dir{-};
(-10,0)*{\bullet}; (10,0)*{\bullet};(20,0)*{\bullet};
(-11,-3)*{s^2+s-1};(10,-3)*{0};(19,-3)*{-2};(39,-3)*{-2};
(-10,3)*{C_s};(10,3)*{L_s};(20,3)*{E_1};(40,3)*{E_{s+1}};
(0,2)*{\scriptsize{s}};
\POS (30,-7.5)*\txt{$\underbrace{\hspace{2.2cm}}_{\mbox{$s+1$}}$};
\endxy\end{center}
 The smooth surface $S_{s}$
is defined over $\mathbb{R}$. A minimal log-resolution
$f_{s}:(V_{s},B_{s})\rightarrow(V_{0,s},B_{0,s})$ defined over
$\mathbb{R}$ of the pair $(V_{0,s},B_{0,s})$ is obtained by taking a
log-resolution of the singular point $[0:0:1]$ of $C_{s}$ and
blowing-up the real point $p$ and its infinitely near points $s$
times to separate the proper transforms of $C_{s}$ and $L_{s}$.

After blowing-up the singular point of $C_s$ on $V_{0,s}$, the total
transform of $B_{0,s}$ is given by the following dual graph, where
$F$ is the exceptional curve contracted on the singular point of
$C_s$.
\begin{center}
\xy (-60,5)*{};
 (-30,0)*{};(10,0)*{}**\dir2{-};
 (10,0)*{};(25,0)*{}**\dir{-};
(25,0)*{};(35,0)*{}**\dir{..};(35,0)*{};(40,0)*{\bullet}**\dir{-};
(-30,0)*{\bullet};(-10,0)*{\bullet};
(10,0)*{\bullet};(20,0)*{\bullet};
(-31,-3)*{-1};(-10.5,-3)*{s-1};(10,-3)*{0};(19,-3)*{-2};(39,-3)*{-2};
(-30,3)*{F};(-10,3)*{C_s};(10,3)*{L_s};(20,3)*{E_1};(40,3)*{E_{s+1}};
(0,2)*{\scriptsize{s}};(-20,2)*{\scriptsize{s}};
\POS (30,-7.5)*\txt{$\underbrace{\hspace{2.2cm}}_{\mbox{$s+1$}}$};
\endxy
\end{center}

The total transform $B_{s}$ of $B_{0,s}$ is thus a tree of
$\mathbb{R}$-rational curves whose dual graph is depicted on
Figure~\hyperref[fig:MS-Surfaces]{4} (which is the same as the graph
of $X_{n,m}^{(1)}$ given in \cite[Theorem 1, Page 339]{MS90}, with
$m=1$ and $n=s+1$).

\begin{figure}[ht]
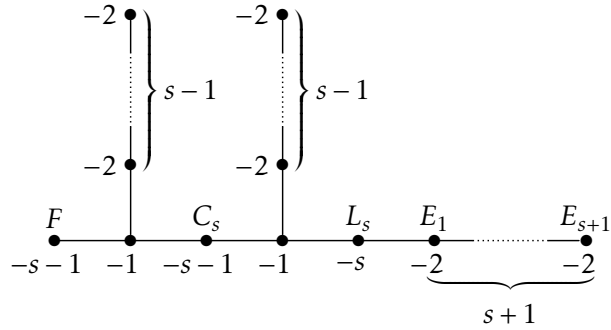

\xy (-85,5)*{};
 (-30,0)*{};(25,0)*{}**\dir{-};
(25,0)*{};(35,0)*{}**\dir{..};(35,0)*{};(40,0)*{\bullet}**\dir{-};
(-20,0)*{\bullet};(-20,15)*{}**\dir{-};(-20,10)*{\bullet};
(0,0)*{\bullet};(0,15)*{}**\dir{-};(0,15)*{};(0,25)*{}**\dir{..};(0,25)*{};(0,30)*{\bullet}**\dir{-};
(-20,15)*{};(-20,25)*{}**\dir{..};(-20,25)*{};(-20,30)*{\bullet}**\dir{-};
(-30,0)*{\bullet};(-10,0)*{\bullet};(10,0)*{\bullet};(20,0)*{\bullet};(0,10)*{\bullet};
(-31,-3)*{-s-1};
(-21,-3)*{-1};(-11,-3)*{-s-1};(-1,-3)*{-1};(9,-3)*{-s};(19,-3)*{-2};(39,-3)*{-2};
(-24,10)*{-2};(-24,30)*{-2};(-4,10)*{-2}; (-4,30)*{-2};
(-30,3)*{F};(-10,3)*{C_s};(10,3)*{L_s};(20,3)*{E_1};(40,3)*{E_{s+1}};
\POS (30,-7.5)*\txt{$\underbrace{\hspace{2.2cm}}_{\mbox{$s+1$}}$};
\POS (2.5,30.5).(2.5,9.5)!C*\frm{\}};(8,20)*{s-1}; \POS
(-17.5,30.5).(-17.5,9.5)!C*\frm{\}};(-12,20)*{s-1};
\endxy
\caption{The weighted dual graph of the boundary divisor $B_s$.}
\vspace{-5.5cm}\phantomsection\label{fig:MS-Surfaces}\vspace{5.5cm}
\end{figure}

\begin{proposition}
For every $s\ge 2$, $S_{s}$ is a fake real plane with contractible
complexification and $\kappa(S_{s})=\kappa_{\mathbb{R}}(S_{s})=2$.
Furthermore, if $s,s'\ge 2$ are two integers, then $S_{s}$ is
birationally diffeomorphic to $S_{s'}$ if and only if $s=s'$.
\end{proposition}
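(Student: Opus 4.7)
The plan is to follow the structure of the proof of Proposition~\ref{prop:fakeCont-Kod1}, with the uniqueness of the log-canonical fibration replaced by Proposition~\ref{prop:real-log-gen-diffbir-iso}.

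First, I would establish that $S_s$ is a fake real plane with contractible complexification and $\kappa(S_s) = \kappa_\r(S_s) = 2$. By construction, the complexification $S_{s,\C}$ coincides with the Miyanishi--Sugie surface $X_{s+1,1}^{(1)}$ of \cite[Theorem~1]{MS90}, which is proven there to be a smooth topologically contractible affine complex surface with $\kappa = 2$. This gives $\kappa(S_s) = 2$ and the contractibility of $S_{s,\C}$. To verify that $S_s(\r) \cong \r^2$, one analyzes the real picture directly: the singular point $[0:0:1]$ of $C_s$, the tangency point $p$, and the simple intersection point $q$ are all $\r$-rational, and a topological analysis of $\p^2_\r \setminus (C_s(\r) \cup L_s(\r))$, followed by tracking through the real blow-ups $\tau$ and $f_s$, identifies $S_s(\r)$ with $\r^2$. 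Because every irreducible component of the boundary $B_s$ is defined over $\r$ and $\r$-rational, and all crossing points of $B_s$ arise from blow-ups of $\r$-rational points (or from the real cusp of $C_s$), we have $B_s = (B_s)_\r$, a tree of real rational curves. Proposition~\ref{Prop:KappaR-properties}\,\hyperref[kapparsmallerkappa]{(2)} then yields $\kappa_\r(S_s) = \kappa(S_s) = 2$. By Corollary~\ref{cor:Kodaira-A2}, $S_s$ is not birationally diffeomorphic, hence not isomorphic, to $\A^2_\r$, so $S_s$ is a fake real plane.

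For the classification part, suppose that $\varphi\colon S_s \dashrightarrow S_{s'}$ is a birational diffeomorphism. The pair $(V_s, B_s)$ is a minimal SNC completion of $S_s$: inspecting Figure~\hyperref[fig:MS-Surfaces]{4}, the only $(-1)$-curves occurring in $B_s$ are the two unlabeled nodes lying between $F$ and $C_s$ and between $C_s$ and $L_s$, and each of them meets three other irreducible components of $B_s$, hence cannot be contracted while keeping the total transform SNC. By Proposition~\ref{prop:real-log-gen-diffbir-iso}, $\varphi$ is then actually an isomorphism of real algebraic varieties. By the minimality of the SNC completions $(V_s, B_s)$ and $(V_{s'}, B_{s'})$, this isomorphism extends uniquely to an isomorphism of pairs $(V_s, B_s) \cong (V_{s'}, B_{s'})$, inducing an isomorphism of their weighted dual graphs. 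One then reads off $s$ from the graph---for instance, from the length $s+1$ of the chain of $(-2)$-curves from $E_1$ to $E_{s+1}$, or from the weight $-s-1$ at the node labeled $C_s$---to conclude that $s = s'$.

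The main obstacle I anticipate is the direct verification that $S_s(\r) \cong \r^2$, which requires careful topological analysis of the real locus $\p^2_\r \setminus (C_s(\r) \cup L_s(\r))$ and of its behaviour under the real blow-ups defining $V_s$. The structural steps---birational invariance of $\kappa_\r$ under birational diffeomorphism, minimality of the SNC completion, and recovery of $s$ from the weighted dual graph---are then clean consequences of the framework developed in Section~\ref{Sec:RealKod} and of Proposition~\ref{prop:real-log-gen-diffbir-iso}.
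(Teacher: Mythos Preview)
Your proposal is correct and follows essentially the same route as the paper: cite \cite{MS90} for contractibility and $\kappa=2$, use $B_s=(B_s)_{\r}$ together with Proposition~\ref{Prop:KappaR-properties}\,\hyperref[kapparsmallerkappa]{(2)} to get $\kappa_{\r}(S_s)=2$, apply Proposition~\ref{prop:real-log-gen-diffbir-iso} to upgrade any birational diffeomorphism to an isomorphism, and then distinguish the pairs $(V_s,B_s)$ by their weighted dual graphs. The one place where the paper is more efficient than your outline is the verification that $S_s(\r)\simeq\r^2$: rather than a direct topological analysis of $\p^2(\r)\setminus(C_s(\r)\cup L_s(\r))$ through the blow-ups, the paper invokes the general criterion \cite[Proposition~2.4]{DM15}, which deduces $S(\r)\simeq\r^2$ from the contractibility of $S_{\C}$ and the fact that $B=B_{\r}$ is a tree of $\r$-rational curves, so the obstacle you anticipate does not actually arise.
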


\begin{proof}
The fact that the complexification of $S_{s}$ is contractible and of
log-general type is proven in \cite{MS90}: the log-general type is
the purpose of the construction and the contrability is given in
\cite[Theorem 2]{MS90}. Since $B_{s}=B_{s,\mathbb{R}}$ by
construction, we have $\kappa_{\mathbb{R}}(S_{s})=\kappa(S_{s})=2$,
and $S_{n}$ is a fake real plane by virtue of \cite[Proposition
2.4]{DM15}. By Proposition \ref{prop:real-log-gen-diffbir-iso},
every birational diffeomorphism between $S_{s}$ and $S_{s'}$ is an
isomorphism. But the description of the dual graphs of the
boundaries $B_{s}$ in Figure \hyperref[fig:MS-Surfaces]{4} implies
that every isomorphism between $S_{s}$ and $S_{s'}$ extends to an
isomorphism of pairs between $(V_{s},B_{s})$ and $(V_{s'},B_{s'})$
and that two such pairs are non isomorphic for different $s$ and
$s'$. \qed
\end{proof}

\subsection{Fake real planes of general type with nontrivial real forms}\label{SubSecNontrivialforms}
To complete this section, we reconsider a family of fake real planes
of general type with two real forms intoduced in \cite[\S
3.2.2]{DM15}. We start with the projective duals $\Gamma_{1}$ and
$\Gamma_{2}$ of real nodal cubic curves
$C_{1},C_{2}\subseteq\mathbb{P}_{\r}^{2}$, such that the two
branches at the singular point of $C_1,C_2$ are real, respectively
non-real.

Note that $C_1,C_2$ are not equivalent under
$\Aut(\p^2_\r)=\mathrm{PGL}_2(\r)$, and that every real nodal cubic
curve of $\p^2_\r$ is projectively equivalent to either $C_1$ or $C_2$.
The latter can be checked by looking at the parametrisations
$\p^1_\r\to C_1,C_2$, given by polynomials of degree $3$ having the
same value at two points, which are either real or pairs of non-real
complex conjugates. Explicitely, one can choose, for instance, the
equations of $C_1$ and $C_2$ to be
\begin{center}
$x^2z-y^2z+xy^2=0 \mbox{ and }x^2z+y^2z-xy^2=0$.\,\footnote{The
equations given in \cite[\S 3.2.2]{DM15} are unfortunately false, as
both are linearly equivalent, having one singular point with two
real branches.}\end{center} With these coordinates, we find that
$C_1$ and $C_2$ are exchanged by the non-real complex projective
transformation $[x:y:z]\mapsto [x:\mathbf{i}y:z]$. Moreover, both
curves $C_1,C_2$ have a singular point at $[0:0:1]$, an inflection
point at $[0:1:0]$ and then two other complex inflection points,
which are $[1:\pm \mathbf{i}\frac{\sqrt{3}}{3}:\frac{1}{4}]$ for
$C_1$ and $[1:\pm \frac{\sqrt{3}}{3}:\frac{1}{4}]$ for $C_2$.

The curves $\Gamma_{1}$ and $\Gamma_{2}$ are thus rational quartics
with three cusps: an ordinary real cusp $p_{0}$ corresponding to the
common $\r$-rational flex of $C_{1}$ and $C_{2}$, and either a pair
of non-real conjugate cusps $q$ and $\overline{q}$ for $\Gamma_{1}$
or an additional pair or real ordinary cusps $q_{1}$ and $q_{2}$ for
$\Gamma_{2}$. So $\Gamma_{1}$ and $\Gamma_{2}$ are not isomorphic
over $\r$, but their respective complexifications are both
projectively equivalent over $\C$. In fact, after change of
coordinates, the curves $\Gamma_1$ and $\Gamma_2$ can be given by
the equations
\[(x^2+y^2)^2+z(2x^3+2xy^2-y^2z)=0\text{ and }x^2y^2+x^2z^2+y^2z^2+2xyz(x+y-z)=0 ,\]
and the projective transformation $\theta\colon [x:y:z]\mapsto
[x+\mathbf{i}y:x-\mathbf{i}y:z/2]$ maps $\Gamma_1$ isomorphically
onto $\Gamma_2$. The cusps of $\Gamma_1$ are then $p_0=[0:0:1],
q=[1:\mathbf{i}:0]$, $\overline{q}=[1:-\mathbf{i}:0]$, and the ones
of $\Gamma_2$ are $p_0=[0:0:1]$, $q_1=[1:0:0]$, $q_2=[0:1:0]$.

For $i=1,2$, the tangent line $L_i=T_{p_{0}}(\Gamma_{i})$ to
$\Gamma_{i}$ at $p_{0}$ (given respectively by $y=0$ and $x=y$ and
satisfying $\theta(L_1)=L_2$) intersects $\Gamma_{i}$ transversally
in a unique other real point $p_i$ different from $p_{0}$ (being
given by $p_1=[1:0:-1/2]$ and $p_2=[1:1:-1/4]=\theta(p_1)$). Let
$(a,b)$ be a pair of positive integers such that $4b-a=\pm1$ and let
$\tau_{i}:V_{i}\rightarrow\mathbb{P}_{\r}^{2}$ be the real
birational morphism obtained by first blowing-up $p_i$ with
exceptional divisor $E_{1}\simeq \mathbb{P}^1_{\r}$ and then
blowing-up a sequence of real points on the successive total
transforms of $E_{1}$ in such a way that the following two
conditions are satisfied: a) the inverse image of $p_i$ is a chain
of curves isomorphic to $\mathbb{P}_{\r}^{1}$ containing a unique
$(-1)$-curve $A$ and b) the coefficients of $A$ in the total
transform of $\Gamma_{i}$ and $L_i=T_{p_{0}}(\Gamma_{i})$ are equal
to $a$ and $b$ respectively. We denote the corresponding exceptional
divisors by $E_{1},\ldots,E_{r-1},E_{r}=A$ and we let
$B_{i}=\Gamma_{i}\cup
T_{p_{0}}(\Gamma_{i})\cup\bigcup_{j=1}^{r-1}E_{j}$, $i=1,2$.
\begin{proposition}
For every choice of integers $(a,b)\in\mathbb{Z}_{>0}$ as above, the
following hold for the surfaces $S_{i}=V_{i}\setminus B_{i}$,
$i=1,2$:
\begin{enumerate}
\item[\rm a)] $S_{1}$ and $S_{2}$ are $\mathbb{Z}$-acyclic fake real planes of
Kodaira dimension $2$ with isomorphic complexifications.

\item[\rm b)] $\kappa_{\r}(S_{2})=2$, in particular $S_{2}$ is not
birationally diffeomorphic to $\A_{\r}^{2}$.

\item[\rm c)] $\kappa_{\r}(S_{1})=-\infty$, and $S_{1}$ is actually
birationally diffeomorphic to $\A_{\r}^{2}$.
\end{enumerate}
\end{proposition}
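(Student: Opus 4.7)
For part (a), I would use the complex projective transformation $\theta\colon [x:y:z]\mapsto [x+\mathbf{i}y:x-\mathbf{i}y:z/2]$ described above, which realizes an isomorphism $\Gamma_1\simeq \Gamma_2$ over $\C$ and satisfies $\theta(L_1)=L_2$, $\theta(p_1)=p_2$. Since the construction of $(V_i,B_i)$ is determined up to isomorphism by the triple $(\Gamma_i,L_i,p_i)$ together with the numerical data $(a,b)$, the map $\theta$ lifts to a $\C$-isomorphism $(V_1)_{\C}\iso (V_2)_{\C}$ carrying $(B_1)_{\C}$ onto $(B_2)_{\C}$; restricting yields $(S_1)_{\C}\simeq (S_2)_{\C}$. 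The equality $\kappa(S_i)=2$ and the $\mathbb{Z}$-acyclicity of $(S_i)_{\C}$ can then be read off from the weighted dual graph of a minimal SNC completion of the complexification: the condition $4b-a=\pm1$ is precisely the arithmetic requirement that makes the determinant of the intersection matrix of the boundary tree equal to $\pm1$, which, combined with the fact that the boundary is a tree of smooth rational curves, yields $\mathbb{Z}$-acyclicity by the standard computation as in \cite[\S 3.2.2]{DM15}. That $S_i$ is then a fake real plane follows from \cite[Proposition 2.4]{DM15}.

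For part (b), I would extend $(V_2,B_2)$ to a smooth SNC completion $(\widetilde{V}_2,\widetilde{B}_2)$ by further resolving the three cusps $p_0,q_1,q_2$ of $\Gamma_2$. Since these three cusps, together with $p_2$ and all the infinitely near centres blown up in the construction of $V_2$, are real, every irreducible component of $\widetilde{B}_2$ is a real rational curve isomorphic to $\p^1_\r$, so $\widetilde{B}_{2,\r}=\widetilde{B}_2$. Proposition~\ref{Prop:KappaR-properties}\,\hyperref[kapparsmallerkappa]{(2)} then gives $\kappa_\r(S_2)=\kappa(S_2)=2$, and Corollary~\ref{cor:Kodaira-A2} yields that $S_2$ is not birationally diffeomorphic to $\A^2_\r$.

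For part (c), the plan is to construct an explicit birational diffeomorphism $\varphi\colon S_1\iso \A^2_\r$; the equality $\kappa_\r(S_1)=-\infty$ will then follow automatically from Corollary~\ref{cor:Kodaira-A2}. The key feature is that the pair of conjugate cusps $q,\overline{q}$ of $\Gamma_1$, together with all the infinitely near centres arising from their cuspidal resolution over $\C$, are non-real and hence contain no real points. One can thus perform a sequence of real birational modifications of the pair $(\p^2_\r,\Gamma_1\cup L_1)$ whose centres consist exclusively of pairs of conjugate non-real points (starting with $q,\overline{q}$ and their infinitely near pairs) and which are therefore $\r$-biregular on real loci. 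The strategy is then to design such a sequence---typically a real Cremona-type transformation centred at $q,\overline{q}$ followed by contractions of the non-real components thereby produced, and of real components of the transformed boundary that become contractible---so as to reduce $(\p^2_\r,\Gamma_1\cup L_1)$ birationally to $(\p^2_\r,L)$ for a real line $L$. Since the blow-ups $\tau_1\colon V_1\to\p^2_\r$ at the real point $p_1$ and its infinitely near real points are disjoint from the non-real centres used above and have exceptional divisors contained in $B_1$, this descends to the desired birational diffeomorphism $S_1\iso \p^2_\r\setminus L=\A^2_\r$.

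The main obstacle lies in the concrete part of (c): producing such an explicit sequence of real modifications requires careful bookkeeping of the infinitely near complex-conjugate singular structure of $\Gamma_1$ at $q,\overline{q}$ and of the way the strict transforms of $\Gamma_1$ and $L_1$ are modified at each step, while ensuring that no real base point is ever introduced and that the SNC property is preserved throughout. Once the birational diffeomorphism has been exhibited, the conclusion $\kappa_\r(S_1)=-\infty$ is immediate from the invariance of the real Kodaira dimension (Proposition~\ref{Prop:KappaR-properties}\,\hyperref[kapparInvariant]{(1)}) combined with Corollary~\ref{cor:Kodaira-A2}, producing the desired asymmetry between $S_1$ and $S_2$ despite their isomorphic complexifications.
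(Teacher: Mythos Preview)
Your approach for (a) and (b) is essentially that of the paper: lift $\theta$ to identify the complexifications, cite \cite{DM15} for $\mathbb{Z}$-acyclicity and $\kappa=2$, and for $S_2$ pass to a smooth SNC completion whose boundary is a tree of real $\p^1_\r$'s meeting only in real points, so that Proposition~\ref{Prop:KappaR-properties}\,\hyperref[kapparsmallerkappa]{(2)} applies. (You are in fact slightly more careful than the paper here, correctly noting that one must first resolve the cusps of $\Gamma_2$ and the tangency with $L_2$ before $B_2$ becomes SNC.)

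For (c) your strategy---simplify the pair $(\p^2_\r,\Gamma_1\cup L_1)$ by $\r$-biregular modifications centred at the non-real conjugate cusps $q,\overline{q}$---is indeed the idea behind \cite[Proposition~21]{DMSpitz}, which the paper simply cites. However, your descent step contains an error: the exceptional divisors of $\tau_1$ are \emph{not} all contained in $B_1$, since by construction the last one $A=E_r$ lies in $S_1$. Consequently $\tau_1$ does \emph{not} induce a birational diffeomorphism between $S_1$ and $\p^2_\r\setminus(\Gamma_1\cup L_1)$: the real curve $A(\r)\subset S_1(\r)$ is collapsed onto the boundary point $p_1\in(\Gamma_1\cup L_1)(\r)$. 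So even granting an $\r$-biregular simplification of $(\p^2_\r,\Gamma_1\cup L_1)$ to $(\p^2_\r,L)$, you cannot conclude directly for $S_1$. The actual argument must keep track of what happens to $A$ under the composite map, or equivalently perform the non-real modifications on $V_1$ rather than on $\p^2_\r$; this is precisely the bookkeeping carried out in \cite{DMSpitz}.
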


\begin{proof}
The complex surfaces $S_{1,\C}$ and $S_{2,\C}$ are isomorphic, by
lifting the projective transformation $\theta$. The fact that
$S_{1}$ and $S_{2}$ are $\mathbb{Z}$-acyclic fake real planes of
log-general type is established in \cite[Proposition 3.10]{DM15}.
Since by construction $B_{2}$ is a tree of $\mathbb{R}$-rational
curves, we have $\kappa_{\r}(S_{2})=\kappa(S_{2})=2$ by
Proposition~\ref{Prop:KappaR-properties}\,\hyperref[kapparsmallerkappa]{(2)},
and so $S_{2}$ is not birationally diffeomorphic to $\A_{\r}^{2}$.
The fact that $S_{1}$ is birationally diffeomorphic to $\A_{\r}^{2}$
is proven in \cite[Proposition 21]{DMSpitz}. \qed
\end{proof}

\providecommand{\bysame}{\leavevmode\hbox
to3em{\hrulefill}\thinspace}
%
%

\bibliographystyle{amsalpha}

\begin{thebibliography}{99}

\bibitem{BH} I. Biswas and J. Huisman, \emph{Rational real algebraic models of topological surfaces}, Doc. Math. {\bf 12} (2007), 549--567.
\MR{2377243}

\bibitem{BM} J. Blanc and F. Mangolte, \emph{Geometrically rational real conic bundles and very transitive actions},
Compos. Math. {\bf 147} (2011), no. 1, 161--187. \MR{2771129}

\bibitem{tDPe90} T. tom Dieck and T. Petrie, \emph{Contractible affine surfaces of Kodaira dimension one}, Japan J. Math. {\bf 16} (1990), no. 1, 147--169.
\MR{1064448}

\bibitem{tDPe93} T. tom Dieck and T. Petrie, \emph{Homology planes and algebraic curves}, Osaka J. Math. {\bf 30} (1993), no. 4, 855--886.
\MR{1250787}

\bibitem{DM15} A. Dubouloz and F. Mangolte, \emph{Fake real planes: exotic affine algebraic models of $\r^{2}$}, Selecta Math. (N.S.) {\bf 23} (2017), no. 3, 1619--1668.
\MR{3663591}

\bibitem{DMSpitz} A. Dubouloz and F. Mangolte, \emph{Real Frontiers of Fake Planes}, Eur. J. Math. {\bf 2} (2016), no. 1, 140--168.
\MR{3454095}

\bibitem{Fu82} T. Fujita, \emph{On the topology of noncomplete algebraic surfaces}, J. Fac. Sci. Univ. Tokyo Sect. IA Math. {\bf 29} (1982), no. 3, 503--566.
\MR{0687591}

\bibitem{GuMi87} R. V.~Gurjar and M. Miyanishi, \emph{Affine surfaces with $\overline{\kappa}\leq 1$}. In: Algebraic geometry and commutative algebra, Vol. I (in honor of M. Nagata), pp. 99--124, Kinokuniya, Tokyo, 1988.
\MR{0977756}

\bibitem{GuP97} R. V.~Gurjar and C. R. Pradeep, \emph{$\mathbb{Q}$-homology planes are rational. III}, Osaka J. Math. {\bf 36} (1999), no. 2, 259--335.
\MR{1736480}

\bibitem{GuPS97} R. V.~Gurjar, C. R. Pradeep, and A. R. Shastri, \emph{On rationality of logarithmic $\mathbb{Q}$-homology planes. II}, Osaka J. Math. {\bf 34} (1997), no. 3, 725--743.
\MR{1613057}

\bibitem{Ha77} R. Hartshorne, \emph{Algebraic geometry}, Graduate Texts in Mathematics, vol. 52, Springer-Verlag, New York-Heidelberg, 1977.
\MR{0463157}

\bibitem{Hi64} H. Hironaka, \emph{Resolution of singularities of an algebraic variety over a field of characteristic zero. I, II}, Ann. of Math. (2) {\bf 79} (1964), 109--203; 205--326.
\MR{0199184}

\bibitem{HM} J. Huisman and F. Mangolte, \emph{The group of automorphisms of a real rational surface is $n$-transitive},
Bull. Lond. Math. Soc. {\bf 41} (2009), no. 3, 563--568.
\MR{2506841}

\bibitem{Ii70} S. Iitaka, \emph{On $D$-dimensions of algebraic varieties}, Proc. Japan Acad. {\bf 46} (1970), 487--489.
\MR{0285532}

\bibitem{Ii77} S. Iitaka,  \emph{On logarithmic Kodaira dimension of algebraic varieties}. In: Complex analysis and algebraic geometry, pp. 175--189, Iwanami Shoten, Tokyo, 1977.
\MR{0569688}

\bibitem{Ii78} S. Iitaka, \emph{Some applications of logarithmic Kodaira dimension}.
In: Proceedings of the International Symposium on Algebraic Geometry
(Kyoto Univ., Kyoto, 1977), pp. 185--206, Kinokuniya Book Store,
Tokyo, 1978. \MR{0578859}

\bibitem{MiyBook} M. Miyanishi, \emph{Open {A}lgebraic {S}urfaces}, CRM Monograph Series, vol. 12, American Mathematical Society, Providence, RI, 2001.
\MR{1800276}

\bibitem{MS90} M. Miyanishi and T. Sugie, \emph{Examples of homology planes of general type}.
Appendix to: T. Sugie, \emph{On T. Petrie's problem concerning
homology planes}, J. Math. Kyoto Univ. {\bf 30} (1990), no. 2,
317--342. \MR{1068794}

\bibitem{Na62}
M. Nagata, \emph{Imbedding of an abstract variety in a complete
variety}, J. Math. Kyoto Univ. {\bf 2} (1962), 1--10. \MR{0142549}


\bibitem{Na63}
M. Nagata, \emph{A generalization of the imbedding problem of an
abstract variety in a complete variety}, J. Math. Kyoto Univ. {\bf
3} (1963), 89--102. \MR{0158892}

\bibitem{Ram71} C. P. Ramanujam, \emph{A topological characterisation of the affine
plane as an algebraic variety}, Ann. of Math. (2) {\bf 94} (1971),
69--88. \MR{0286801}

\bibitem{Sil89} R. Silhol, \emph{Real algebraic surfaces}, Lecture Notes in
Mathematics, vol. 1392, Springer-Verlag, Berlin, 1989. \MR{1015720}








\end{thebibliography}
\bibliographymark{References}
\def\cprime{$'$}

\end{document}